\documentclass{amsart}

\usepackage{amssymb,amsmath,graphicx}

\usepackage{xcolor}

\usepackage{graphicx}
\usepackage{xypic}
\usepackage{capt-of}

\usepackage[shortlabels]{enumitem}

\usepackage[utf8]{inputenc}
\usepackage[T1]{fontenc}
\hyphenation{Ma-zur-kie-wicz}

\newtoks\prt

\numberwithin{equation}{section}

\newtheorem{thm}{Theorem}[section]

\newtheorem{question}[thm]{Question}

\newtheorem{lemma}[thm]{Lemma}

\newtheorem{prop}[thm]{Proposition}

\newtheorem{cor}[thm]{Corollary}

\newtheorem{theoreml}{Theorem}[section]

\theoremstyle{definition}

\newtheorem{remark}[thm]{Remark}

\newtheorem{definition}[thm]{Definition}

\def\eqn#1$$#2$${\begin{equation}\label#1#2\end{equation}}

\def\C{\mathcal C}

\def\ep{\varepsilon}

\def\en{\mathbb N}

\def\er{\mathbb R}

\def\spt{\operatorname{supp}}

\def \reg {\partial _{\kern1pt\text{reg}}}

\newcommand{\norm}[1]{\left\|#1\right\|}

\newcommand{\abs}[1]{\left| #1  \right|}


\title{On embedding separable spaces $\mathcal{C}(L)$ in arbitrary spaces $\mathcal{C}(K)$}
\author[J.~Rondo\v s]{Jakub Rondo\v s}
\address{Kurt G\"odel Research Center, Department of Mathematics, Vienna University, Vienna, Austria.}
\email{jakub.rondos@gmail.com}
\author[D.\ Sobota]{Damian Sobota}
\address{Kurt G\"odel Research Center, Department of Mathematics, Vienna University, Vienna, Austria.}
\email{damian.sobota@univie.ac.at}

\thanks{J. Rondo\v{s} was supported by the Austrian Science Fund (FWF):~I~5918-N. D. Sobota was supported by the Austrian Science Fund (FWF):~ESP~108-N}

\begin{document}

\subjclass[2020]{Primary: 46E15, 46B03, 46B04. Secondary: 47B38, 54A25, 54C35.}

\keywords{$C(K)$ space, isometric embedding, isomorphic embedding, Cantor--Bendixson derivative, relative cellularity, scattered spaces}

\begin{abstract}
Supplementing and expanding classical results, for compact spaces $K$ and $L$, $L$ metric, and their Banach spaces $\mathcal{C}(L)$ and $\mathcal{C}(K)$ of continuous real-valued functions, we provide several characterizations of the existence of isometric, resp. isomorphic, embeddings of $\mathcal{C}(L)$ into $\mathcal{C}(K)$. In particular, we show that if the embedded space $\C(L)$ is separable, then the classical theorems of Holszty\'{n}ski and Gordon become equivalences.

We also obtain new results describing the relative cellularities of the perfect kernel of a given compact space $K$ and of the Cantor--Bendixson derived sets of $K$ of countable order in terms of the presence of isometric copies of specific spaces $\mathcal{C}(L)$ inside $\mathcal{C}(K)$. 
\end{abstract}

\maketitle

\section{Introduction\label{sec:intro}}

For a compact space $K$, by $\C(K)$ we denote as usual the Banach space of continuous real-valued functions on $K$ endowed with the supremum norm. Recall that $\C(K)$ is separable if and only if $K$ is metrizable. The main objective of this paper is to collect several classical theorems and combine them with our new results to obtain simple topological characterizations of the presence of isometric and isomorphic linear copies of $\C(L)$ inside  $\C(K)$, where $K, L$ are compact spaces and $L$ is metrizable, in particular in terms of Cantor--Bendixson derivatives of $K$ and $L$. For a compact space $K$ we also provide a new description of the relative cellularities of the Cantor--Bendixson derivatives $K^{(\alpha)}$ of countable order $\alpha$ and of the perfect kernel $K^{(\infty)}$ in terms of isometric embeddings of specific spaces $\C(L)$ into the space $\C(K)$.

Let us first briefly discuss the context of our study. The basic fact concerning isometric embeddings is that if $\rho \colon K \rightarrow L$ is a continuous surjection of compact spaces $K$ and $L$, then the composition operator $T \colon \C(L) \rightarrow \C(K)$, defined for every $f \in \C(L)$ by $T(f)=f\circ\rho$, is an isometric embedding which preserves lattice and algebraic operations. A partial converse to this is the classical result of Holszty\'{n}ski \cite{Holsztynski1966} which says that if $\C(L)$ is isometrically embedded in $\C(K)$, then there exists a closed subset of $K$ which maps continuously onto $L$. In general, the reverse implication does not hold---consider, for example, the spaces $L=[1, \omega_1]$ and $K=[0, 1]^{\omega_1}$, where $L$ is even homeomorphic to a subset of $K$, but $\C(L)$ cannot be isomorphically embedded into $\C(K)$, since $K$ satisfies the countable chain condition, while $L$ does not, cf. \cite[Theorem 1.3]{rondos_cardinal_invariants}.

Concerning isomorphic embeddings, the classical theorem of Gordon \cite{Gordon3} will be important to us---it asserts that if $K$ and $L$ are compact spaces and $T \colon \C(L) \rightarrow \C(K)$ is an isomorphic embedding with $\norm{T} \norm{T^{-1}}<3$ (in particular, if $T$ is an isometry), then for each ordinal number $\alpha$ we have $\abs{L^{(\alpha)}} \leq \abs{K^{(\alpha)}}$, where for a topological space $S$ and an ordinal number $\eta$ by $S^{(\eta)}$ we denote the $\eta$-th Cantor--Bendixson derivative of $S$ (see Section \ref{sec:prelim} for all relevant definitions). It is again easy to see that the reverse implication is not true in general---consider this time, for example, the spaces $L=[1,\omega_1]$ and $K=[0, 1]$ and note that $\C([1,\omega_1])$ is not separable. 

If one additionally assumes that $L$ is an uncountable metric compact space, then for any compact space $K$ the embeddability of $\C(L)$ into $\C(K)$ can be completely characterized using well-known results. Indeed, combining the celebrated theorems of Miljutin (see \cite[Theorem 2.1]{RosenthalC(K)}), Pe\l czy\'{n}ski and Semadeni \cite{Pelczynski_Semadeni_scattered}, Rosenthal \cite[Theorem 2.8]{RosenthalC(K)}, or above mentioned Holszty\'{n}ski \cite{Holsztynski1966} and Gordon \cite{Gordon3}, one can easily prove that in this setting the following assertions are equivalent:
\begin{enumerate}[(a)]
    \item $\C(K)$ contains an isometric copy of $\C(L)$,
    \item $\C(K)$ contains an isomorphic copy of $\C(L)$,
    \item $\C(K)$ contains a positive isometric copy of $\C(L)$,
    \item there is an isomorphic embedding $T \colon \C(L) \rightarrow \C(K)$ with $\norm{T} \norm{T^{-1}} <3$,        
    \item there exists a closed subset of $K$ which maps continuously onto $L$,
    \item $\mathfrak{c}\le\abs{K^{(\alpha)}}$ for each ordinal number $\alpha$,
    \item $K$ is nonscattered.
\end{enumerate}
%

Similarly, if one initially assumes that $K$ is a nonscattered compact space, then for any metric compact space $L$ all the above statements easily hold true, again by the aforementioned classical results. However, it is quite surprising that until now there has not been known a characterization analogous to the one above which would work for all metrizable compact spaces $L$, regardless whether uncountable or not, and all compact spaces $K$, regardless whether nonscattered or not. We aim to present such a characterization in this work, but we will need to split it into two parts, separately for isometric embeddings (Theorem \ref{thm:mainA}) and for isomorphic embeddings (Theorem \ref{thm:mainB}).

Our first main result shows, among other things, that under the additional assumption that $L$ is metrizable both Holszty\'{n}ski's and Gordon's theorems become equivalences. For the definition of the height $ht(S)$ of a space $S$, see Section \ref{sec:prelim}.

\begin{theoreml}\label{thm:mainA}
For a compact metric space $L$ and a compact space $K$, the following assertions are equivalent:
\begin{enumerate}[(i)]
    \item $\C(K)$ contains an isometric copy of $\C(L)$,
    \item there exists a closed subset of $K$ which maps continuously onto $L$,
    \item $\abs{L^{(\alpha)}} \leq \abs{K^{(\alpha)}}$ for each ordinal number $\alpha$,
    \item $ht(L) \leq ht(K)$, and if $L$ is scattered, then 
    $\abs{L^{(ht(L)-1)}} \leq \abs{K^{(ht(L)-1)}}$,
    \item $\C(K)$ contains a positive isometric copy of $\C(L)$,
    \item there is an isomorphic embedding $T \colon \C(L) \rightarrow \C(K)$ with $\norm{T} \norm{T^{-1}} <3$.
\end{enumerate}
\end{theoreml}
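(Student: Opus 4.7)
The plan is to prove the cycle $(v) \Rightarrow (i) \Rightarrow (vi) \Rightarrow (iii) \Rightarrow (iv) \Rightarrow (ii) \Rightarrow (v)$. Three of the links are immediate or quoted: $(v) \Rightarrow (i)$ is trivial, $(i) \Rightarrow (vi)$ holds because any linear isometric embedding $T$ satisfies $\|T\|\|T^{-1}\| = 1 < 3$, and $(vi) \Rightarrow (iii)$ is Gordon's theorem. For $(iii) \Rightarrow (iv)$, if $L$ is scattered with $ht(L) = \beta$ then $L^{(\beta - 1)} \neq \emptyset$, so by (iii) also $K^{(\beta - 1)} \neq \emptyset$, yielding $ht(K) \geq ht(L)$, and the cardinality condition is itself the instance of (iii) at $\alpha = \beta - 1$; if $L$ is nonscattered then (iii) already forces $K$ to be nonscattered (since for scattered $K$ one would have $|K^{(\alpha)}| = 0$ for large $\alpha$, while $|L^{(\alpha)}| \geq 1$), and then (iv) reduces to a statement about heights in the nonscattered regime.

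For $(ii) \Rightarrow (v)$, given a closed $F \subseteq K$ and a continuous surjection $\rho \colon F \to L$, a Zorn-type argument produces a closed $F^{*} \subseteq F$ on which $\rho$ restricts to an \emph{irreducible} surjection onto $L$. Since irreducible continuous surjections between compact spaces preserve topological weight and $L$ is metrizable, $F^{*}$ is metrizable as well. A Borsuk--Dugundji-type extension theorem for closed metrizable subsets of a compact Hausdorff space then provides a positive isometric linear extension operator $E \colon \mathcal{C}(F^{*}) \to \mathcal{C}(K)$, and composing it with the positive isometric pullback $f \mapsto f \circ \rho|_{F^{*}}$ delivers the desired positive isometric embedding of $\mathcal{C}(L)$ into $\mathcal{C}(K)$.

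The core work is $(iv) \Rightarrow (ii)$. If $L$ is nonscattered the hypothesis forces $K$ to be nonscattered; by the classical theorem that every nonscattered compact Hausdorff space contains a copy of the Cantor set, together with the Alexandroff--Hausdorff surjection of the Cantor set onto any compact metric space, one obtains a closed subset of $K$ surjecting onto $L$. If $L$ is scattered, the Mazurkiewicz--Sierpi\'nski classification gives $L \cong [1, \omega^{\alpha} \cdot n]$ for some countable $\alpha$ and integer $n \geq 1$, with $ht(L) = \alpha + 1$ and $|L^{(\alpha)}| = n$; the hypothesis supplies $n$ distinct points $x_{1}, \dots, x_{n} \in K^{(\alpha)}$. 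The task is then to build, around each $x_{i}$, a closed subset $F_{i} \subseteq K$ containing $x_{i}$ that admits a continuous surjection onto $[1, \omega^{\alpha}]$ sending $x_{i} \mapsto \omega^{\alpha}$; using Hausdorffness of $K$ the $F_{i}$ can be chosen pairwise disjoint, and $F = F_{1} \cup \cdots \cup F_{n}$ then maps onto $L$.

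The main obstacle is the transfinite construction of the $F_{i}$ in a compact Hausdorff space $K$ that need not be metrizable. In the successor step, given $x_{i} \in K^{(\beta + 1)}$ one wishes to pick a sequence in $K^{(\beta)}$ clustering at $x_{i}$ and apply the inductive hypothesis in pairwise disjoint closed neighborhoods shrinking to $x_{i}$; in the limit step one uses a cofinal sequence $\beta_{k} \nearrow \alpha$ together with points $y_{k} \in K^{(\beta_{k})}$ clustering at $x_{i}$. The delicate point is to arrange these choices so that the resulting union, together with $x_{i}$, is genuinely closed in $K$ and admits the desired ordinal quotient; this is where the possible absence of first countability at $x_{i}$ could cause difficulty, and will likely be addressed by working throughout inside countable compact metrizable subspaces of $K$.
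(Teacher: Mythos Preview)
Your cycle $(v)\Rightarrow(i)\Rightarrow(vi)\Rightarrow(iii)\Rightarrow(iv)$ is fine, but the remaining two links both break, and neither is easily repaired along the lines you suggest.

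\textbf{The step $(ii)\Rightarrow(v)$.} Irreducible continuous surjections between compact spaces do \emph{not} preserve weight. The Gleason cover $E([0,1])\to[0,1]$ (the Stone space of the complete Boolean algebra $RO([0,1])$, with its canonical projection) is irreducible onto the metrizable space $[0,1]$, yet $E([0,1])$ is extremally disconnected and hence has no nontrivial convergent sequences, so it is not metrizable. Thus your minimal $F^{*}$ need not be metrizable, and the Dugundji-type extension operator you invoke is unavailable. In fact this example also shows there is no workaround via ``some other'' metrizable closed subset: every closed metrizable subspace of $E([0,1])$ is finite, so no closed metrizable subset of $K=E([0,1])$ maps onto $L=[0,1]$ at all, even though $(ii)$ holds with $F=K$.

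\textbf{The step $(iv)\Rightarrow(ii)$.} In the nonscattered case you appeal to the statement that every nonscattered compact Hausdorff space contains a copy of the Cantor set. This is false in ZFC: the Stone space of the Lebesgue measure algebra is perfect (hence nonscattered), extremally disconnected, and therefore contains no infinite compact metrizable subspace, in particular no Cantor set. (The fix here is easy---use Pe\l czy\'nski--Semadeni to map $K$ onto $[0,1]$ and pull back a Cantor subset---but it is a genuine error as written.) The same example also kills your proposed remedy in the scattered case: working ``inside countable compact metrizable subspaces of $K$'' is impossible when $K$ has none that are infinite, yet $K^{(\alpha)}\neq\emptyset$ for every $\alpha$. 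More basically, your successor step asks for a \emph{sequence} in $K^{(\beta)}$ clustering at $x_i$, which need not exist without first countability.

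\textbf{Comparison with the paper.} The paper does not attempt the route $(iv)\Rightarrow(ii)\Rightarrow(v)$. It proves $(iv)\Rightarrow(v)$ \emph{directly}: by transfinite induction on $\alpha$ one constructs, inside any open $U$ with $U^{(\alpha)}\neq\emptyset$, a positive (indeed, PNPP) isometric copy of $\C([1,\omega^{\alpha}])$ supported in $U$, by splitting $U$ into countably many disjoint open pieces of lower height and gluing the inductively obtained embeddings via an explicit norm computation for functions of the form $ah+\sum_n g_n$. This completely sidesteps the need for closed subsets of $K$ mapping onto $L$ and for linear extension operators. Condition $(ii)$ is then recovered from $(i)$ via Holszty\'nski's theorem, not the other way around.
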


Most of the implications in Theorem \ref{thm:mainA} are known; 
what we found out is that there is primarily only one implication needed to be established for the full characterization circle to be closed. 
Indeed, as the reader will surely notice, in order to prove Theorem \ref{thm:mainA} we basically had only to show that if both spaces $K$ and $L$ are scattered and satisfy the inequality $\abs{L^{(ht(L)-1)}} \leq \abs{K^{(ht(L)-1)}}$, then $\C(L)$ embeds positively isometrically into $\C(K)$, that is, that implication (iv)$\Rightarrow$(v) holds for such $K$ and $L$. This is essentially the place where the actual weight of our work lies. It should be however noted that all our methods are general and work regardless whether $K$ is scattered or not. It is just a coincidental case that, as we said earlier, when $K$ is nonscattered, the required arguments can be also easily derived from a simple combination of well-known and classical results (see Remark \ref{remark:first}.(1) for details).

For the proof of Theorem \ref{thm:mainA} as well as for additional equivalent conditions (vii)--(x), in particular in terms of spaces $\C(K,E)$ or for the special case of $K$ zero-dimensional, see Theorems \ref{main-isometric} and \ref{main-isometric-zero}. 

Let us point out that implication (i)$\Rightarrow$(v) of Theorem \ref{thm:mainA} deserves special attention, since, as far as we know, the question whether the existence of an isometric copy of an arbitrary space $\C(L)$ in another space $\C(K)$ implies the existence of a positive isometric copy of $\C(L)$ in $\C(K)$ is open, see Question \ref{question}.

To characterize the existence of isomorphic embeddings, we will utilize the notion of the Szlenk index of a Banach space $E$, denoted as $Sz(E)$. For the definition of the index and its basic properties see e.g. Lancien \cite{Lancien2006}. Note that the Szlenk index is a monotonic function, invariant under isomorphic embeddings. On the other hand, the inequality $Sz(\C(L))\le Sz(\C(K))$ does not imply the existence of an isomorphic copy of $\C(L)$ in $\C(K)$ (consider here, for example, again the Banach spaces $\C([1, \omega_1])$ and $\C([0, 1])$). Recall also the remarkable result by Causey \cite{CAUSEY_C(K)_index} (cf. also \cite{Causey_szlenk_hulls}) stating that the Szlenk index $Sz(\C(K))$ of a given space $\C(K)$ can be directly computed from the height $ht(K)$ of $K$ (see Section \ref{sec:main} for details). 

Using the Szlenk index, the classical isomorphic classification of separable $\C(K)$-spaces due to Bessaga and Pe\l czy\'{n}ski \cite{BessagaPelcynski_classification} and Miljutin \cite[Theorem 2.1]{RosenthalC(K)} can be formulated elegantly and simply: two separable $\C(K)$-spaces are isomorphic if and only if they have the same Szlenk index. In our second main theorem, obtained by combining known results and auxiliary propositions first used to prove Theorem \ref{thm:mainA}, we show that the existence of isomorphic embeddings of separable spaces $\C(L)$ in arbitrary spaces  $\C(K)$ can be characterized similarly. For the proof of Theorem \ref{thm:mainB} as well as for its extension in terms of spaces $\C(K,E)$, see Theorem \ref{main-isomorphic}.

\begin{theoreml}
\label{thm:mainB}
For a compact metric space $L$ and a compact space $K$, $\C(K)$ contains an isomorphic copy of $\C(L)$ if and only if $Sz(\C(L)) \leq Sz(\C(K))$.
\end{theoreml}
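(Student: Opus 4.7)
The forward implication is the standard monotonicity of the Szlenk index under isomorphic embeddings, noted already in the introduction. For the reverse, suppose $Sz(\C(L)) \leq Sz(\C(K))$; the plan is to replace $\C(L)$ by a canonical representative of its isomorphism class and then convert the Szlenk inequality into a topological condition on $K$ via Causey's formula, so that Theorem A (or a classical continuous-surjection argument) delivers the embedding. Concretely, by Miljutin's theorem $\C(L) \simeq \C([0,1])$ when $L$ is nonscattered, while by Mazurkiewicz--Sierpi\'{n}ski and Bessaga--Pe\l{}czy\'{n}ski, if $L$ is scattered (hence countable) then $\C(L) \simeq \C(L^{*})$ with $L^{*} = [1, \omega^{\omega^{\gamma}}]$ for a unique countable ordinal $\gamma$ determined by $L$.

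Case 1, $L$ nonscattered: By Causey's formula, scattered $K$ produces Szlenk indices strictly below $Sz(\C([0,1]))$ (equivalently, $\C(K)$ is Asplund whereas $\C([0,1])$ is not). The hypothesis therefore forces $K$ to be nonscattered. Then the perfect kernel $K^{(\infty)}$ admits a continuous surjection onto $[0,1]$, which composes with the retraction onto $K^{(\infty)}$ (or simply with the inclusion, upon passing to a closed subset) to give an isometric embedding $\C([0,1]) \simeq \C(L) \hookrightarrow \C(K)$.

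Case 2, $L$ scattered: Here $ht(L^{*}) = \omega^{\gamma}+1$ and $|(L^{*})^{(\omega^{\gamma})}| = 1$. If $K$ is nonscattered we proceed as in Case 1, using that $L^{*}$ embeds in $[0,1]$. If $K$ is scattered, Causey's formula yields $Sz(\C(L^{*})) = \omega^{1+\gamma}$ and $Sz(\C(K)) = \omega^{1+\beta}$ with $\beta$ determined by $ht(K)$; the Szlenk inequality then gives $\gamma \leq \beta$, and hence $ht(K) \geq \omega^{\gamma}+1 = ht(L^{*})$. Since $|(L^{*})^{(ht(L^{*})-1)}| = 1$, the cardinality clause in condition (iv) of Theorem~\ref{thm:mainA} is automatic, so Theorem~\ref{thm:mainA} provides the desired isometric (in particular, isomorphic) embedding $\C(L^{*}) \hookrightarrow \C(K)$, which composed with the Bessaga--Pe\l{}czy\'{n}ski isomorphism $\C(L) \simeq \C(L^{*})$ finishes the argument.

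The decisive step is the translation of $Sz(\C(L)) \leq Sz(\C(K))$ via Causey's formula into either nonscatteredness of $K$ or the height inequality $ht(K) \geq ht(L^{*})$; once that translation is in hand, the heavy lifting has already been done by Theorem~\ref{thm:mainA} and by the Bessaga--Pe\l{}czy\'{n}ski--Miljutin classification, leaving essentially no further work for the present theorem. The one subtlety worth watching is that in Case 2 it is crucial to pass to the canonical representative $L^{*}$ before invoking Theorem~\ref{thm:mainA}, because the cardinality clause in condition (iv) can genuinely fail for the original $L$ (e.g.\ if $L = [1, \omega \cdot n]$ and $K = [1, \omega]$) even when $\C(L) \simeq \C(K)$; the isomorphic classification absorbs exactly this discrepancy.
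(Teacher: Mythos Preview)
Your approach is essentially identical to the paper's: reduce $\C(L)$ to a canonical representative via Bessaga--Pe\l{}czy\'{n}ski/Miljutin, translate the Szlenk inequality into a height condition on $K$ via Causey's formula $Sz(\C(K))=\Gamma(ht(K))$, and then invoke (the key ingredient of) Theorem~\ref{thm:mainA}, namely Proposition~\ref{Copies}. There is, however, a computational slip in Case~2: Causey's formula gives $Sz(\C(L^*))=\Gamma(\omega^\gamma+1)=\omega^{\gamma+1}$, not $\omega^{1+\gamma}$, and for infinite $\gamma$ these differ since $1+\gamma=\gamma$. With the wrong exponent your chain ``$\gamma\le\beta$, hence $ht(K)\ge\omega^\gamma+1$'' does not go through as written. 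The clean argument (which the paper gives) is: from $\Gamma(ht(K))=Sz(\C(K))\ge\omega^{\gamma+1}$ one gets $ht(K)>\omega^\gamma$ directly, since otherwise $\omega^\gamma$ would already be a gamma number $\ge ht(K)$. A minor aside: in Case~1 there is in general no retraction of $K$ onto $K^{(\infty)}$; use instead the Pe\l{}czy\'{n}ski--Semadeni surjection $K\twoheadrightarrow[0,1]$ (as the paper does), or your own parenthetical appeal to Theorem~\ref{thm:mainA}(ii)$\Rightarrow$(i).
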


Our remaining main results are devoted to the novel descriptions of the relative cellularity $c(K^{(\alpha)},K)$ of a derived set $K^{(\alpha)}$ of a compact space $K$ in terms of the Banach space $\C(K)$ (see, again, Section \ref{sec:prelim} for the definitions). For basic properties of the relative cellularity see Pasynkov \cite{PASYNKOV_relative_cellularity} and Arhangel'skii \cite{Arhangelskii_cmuc}, and for its connections to isomorphic embeddings of $\C(K)$-spaces see Rondo\v{s} \cite{rondos_cardinal_invariants}. Let us here recall that, by Rosenthal \cite[Theorem on page 230]{Rosenthal_L^infty}, the cellularity $c(K)$ of $K$ is equal to the supremum of cardinal numbers $\kappa$ such that $\C(K)$ contains an isomorphic copy of the space $c_0(\kappa)$, where, for a set $\Gamma$, $c_0(\Gamma)$ stands for the Banach space of all indexed collections $\{a_{\gamma}\}_{\gamma \in \Gamma}$ of real numbers such that for each $\ep>0$ the set $\{\gamma \in \Gamma \colon \abs{a_{\gamma}}>\ep \}$ is finite, equipped with the supremum norm. We provide similar descriptions of the numbers 
$c(K^{(\alpha)}, K)$, where $\alpha$ is a countable ordinal, and $c(K^{(\infty)}, K)$ in terms of isometric embeddings. Here, for a set $\Gamma$ and a compact space $L$, $K_{\Gamma,L}$ stands for the one-point compactification of the disjoint union of $|\Gamma|$ many copies of $L$.

\begin{theoreml}\label{thm:mainC}
For an infinite compact space $K$ and a countable ordinal $\alpha$, we have
\[c(K^{(\alpha)}, K)=\sup  \Big\{ \abs{\Gamma} \colon \C\big(K_{\Gamma, [1, \omega^{\alpha}]}\big) \text{ embeds isometrically into } \C(K)\Big\}.\]
\end{theoreml}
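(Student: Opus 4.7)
The plan is to establish the two inequalities separately.

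For ``$c(K^{(\alpha)}, K) \leq \sup\bigl\{|\Gamma| : \ldots \bigr\}$'', I start from a pairwise disjoint family $\{U_\gamma\}_{\gamma \in \Gamma}$ of nonempty open subsets of $K$ each meeting $K^{(\alpha)}$ and construct an isometric embedding $T : \C(K_{\Gamma, [1, \omega^\alpha]}) \hookrightarrow \C(K)$. For each $\gamma$, I pick $x_\gamma \in U_\gamma \cap K^{(\alpha)}$ and a compact neighborhood $V_\gamma \subseteq U_\gamma$ of $x_\gamma$. A transfinite induction on $\alpha$ (using that $x_\gamma$ remains in the $\alpha$-th Cantor--Bendixson derivative of $V_\gamma$ since $V_\gamma$ is a neighborhood of $x_\gamma$ in $K$) yields a closed $F_\gamma \subseteq V_\gamma$ homeomorphic to $[1, \omega^\alpha]$, with $x_\gamma$ as the top point. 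Since each $F_\gamma$ is metrizable compact, the ``(ii)$\Rightarrow$(i)'' direction of Theorem~\ref{thm:mainA} (applied with $F_\gamma$ as the metric space $L$) supplies linear norm-preserving extension operators $E_\gamma : \C(F_\gamma) \to \C(K)$. Combined with Urysohn functions $\rho_\gamma : K \to [0,1]$ equal to $1$ on $F_\gamma$ and to $0$ outside $U_\gamma$, I set
\[ Tf \ := \ f(\infty) \cdot \mathbf{1}_K + \sum_{\gamma \in \Gamma} \rho_\gamma \cdot E_\gamma\bigl( f|_{L_\gamma} \circ \phi_\gamma - f(\infty) \bigr), \]
where $\phi_\gamma : F_\gamma \to L_\gamma = [1, \omega^\alpha]$ is the canonical homeomorphism; the sum is pointwise-finite because the $U_\gamma$'s are pairwise disjoint. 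Linearity is clear, and isometry and continuity of $Tf$ reduce to the key fact that for each $\varepsilon > 0$ the set $\{\gamma : \|f|_{L_\gamma} - f(\infty)\|_\infty > \varepsilon\}$ is finite (by continuity of $f$ at $\infty$), together with the observation that $K \setminus \bigcup_\gamma U_\gamma \neq \emptyset$ whenever $|\Gamma|$ is infinite (by compactness of $K$).

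For the reverse inequality, suppose $\C(K_{\Gamma, [1, \omega^\alpha]})$ embeds isometrically into $\C(K)$. Holszty\'{n}ski's theorem supplies a closed $F \subseteq K$ and a continuous surjection $\phi : F \to K_{\Gamma, [1, \omega^\alpha]}$. Since each $L_\gamma$ is clopen in $K_{\Gamma, [1, \omega^\alpha]}$, $\phi^{-1}(L_\gamma)$ is clopen in $F$; and a standard transfinite-induction lifting lemma---for a continuous surjection of compacta $F \to L$, every point of $L^{(\alpha)}$ has a preimage in $F^{(\alpha)}$---yields a point $x_\gamma \in F^{(\alpha)} \cap \phi^{-1}(L_\gamma)$ above the top of $L_\gamma$. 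Since $F^{(\alpha)} \subseteq K^{(\alpha)}$ for $F$ closed in $K$, each $x_\gamma$ lies in $K^{(\alpha)}$. To complete the argument I need pairwise disjoint open $W_\gamma \subseteq K$ with $x_\gamma \in W_\gamma$: using normality of $K$ twice, for each $\gamma$ I find an open $A_\gamma \subseteq K$ with $x_\gamma \in A_\gamma$ and $\overline{A_\gamma} \cap F = \phi^{-1}(L_\gamma)$ (so in particular $\overline{A_{\gamma'}} \cap \phi^{-1}(L_\gamma) = \emptyset$ for $\gamma' \neq \gamma$), and then I refine the $\{A_\gamma\}$ into the desired pairwise disjoint family capturing the $x_\gamma$'s by invoking the relative-cellularity machinery developed in~\cite{rondos_cardinal_invariants}.

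The main obstacle is this last refinement step. The naive choice $W_\gamma := A_\gamma \setminus \overline{\bigcup_{\gamma' \neq \gamma} A_{\gamma'}}$ does not automatically contain $x_\gamma$, because $x_\gamma$ may lie in $\overline{\bigcup_{\gamma' \neq \gamma} A_{\gamma'}}$ even though it lies in none of the individual closures $\overline{A_{\gamma'}}$---this is where the closed-but-not-clopen position of $F$ inside $K$ becomes delicate, and is closely related to why the naive inequality $c(F) \leq c(K)$ fails in general. Bypassing this requires the specific lemma on clopen subsets of closed subspaces alluded to above. The remaining technical point---continuity of $T$ at points outside $\bigcup U_\gamma$ in the first direction---I expect to dispatch cleanly via the cofinite-vanishing observation.
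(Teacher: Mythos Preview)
Your forward direction is essentially the paper's argument, though with one misattribution and one unchecked point. Theorem~\ref{thm:mainA}(ii)$\Rightarrow$(i) does \emph{not} give extension operators $E_\gamma\colon\C(F_\gamma)\to\C(K)$; it gives isometric embeddings of $\C(F_\gamma)$ into $\C(K)$, which is a different map. What you actually need is a norm-one \emph{regular} linear extension (Borsuk--Dugundji, available since $F_\gamma$ is metrizable), and regularity is essential: without it your formula need not be an isometry, since for $x\in U_\gamma\setminus F_\gamma$ the value $f(\infty)+\rho_\gamma(x)E_\gamma(g_\gamma)(x)$ is only controlled by $|f(\infty)|+\|g_\gamma\|$, which can exceed $\|f\|$. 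Once repaired, your construction coincides with the paper's Propositions~\ref{Copies} and~\ref{copies of C(K_Gamma_L)}, which build PNPP isometric embeddings $T_\gamma\colon\C([1,\omega^\alpha])\to\C(K)$ supported in $U_\gamma$ directly by transfinite induction on $\alpha$ and glue them via exactly your formula.

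The reverse direction, however, has a genuine gap---the one you yourself flag. Passing through Holszty\'nski loses precisely the information you need: you end up with pairwise disjoint clopen subsets $\phi^{-1}(L_\gamma)$ of the closed set $F$, and there is no general ``relative-cellularity'' lemma that lets you thicken these to pairwise disjoint open sets in $K$ (this is exactly the phenomenon behind $c(F)\not\leq c(K)$ for closed $F\subseteq K$, as you observe). No result in~\cite{rondos_cardinal_invariants} rescues this route. The paper bypasses Holszty\'nski entirely: given the isometric embedding $T\colon\C(L)\to\C(K)$ with $L=K_{\Gamma,[1,\omega^\alpha]}$, for each $\gamma$ one picks a bump function $f_\gamma\in\C(L,[0,1])$ with $f_\gamma(\omega^\alpha_\gamma)=1$ supported in the $\gamma$-th copy $L_\gamma$, and sets $V_\gamma=\{y\in K: |T(f_\gamma)(y)|>1/2\}$. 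The isometry forces the $V_\gamma$ to be pairwise disjoint (if $V_\gamma\cap V_{\gamma'}\neq\emptyset$ then $\|f_\gamma-f_{\gamma'}\|<\|T(f_\gamma-f_{\gamma'})\|$), while an argument of Galego--Villamizar (used in the paper's Proposition~\ref{isometry_and_cellularity}) shows each $V_\gamma$ meets $K^{(\alpha)}$. This yields $|\Gamma|=c(L^{(\alpha)},L)\leq c(K^{(\alpha)},K)$ in one stroke, with no need to separate points of a closed subspace inside $K$.
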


\begin{theoreml}\label{thm:mainD}
For an infinite compact space $K$, we have
\[c(K^{(\infty)}, K)=\sup  \Big\{ \abs{\Gamma} \colon \C\big(K_{\Gamma, [0, 1]}\big) \text{ embeds  isometrically into } \C(K)\Big\}.\]
\end{theoreml}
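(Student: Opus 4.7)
The plan is to establish both inequalities in the claimed equality.

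\emph{Forward direction.} Given $|\Gamma|$ pairwise disjoint open sets $U_\gamma \subseteq K$ each meeting $K^{(\infty)}$, I will construct an isometric embedding $\C(K_{\Gamma, [0,1]}) \hookrightarrow \C(K)$. Since $K^{(\infty)}$ is compact and perfect, each $U_\gamma \cap K^{(\infty)}$ is a nonempty open subset of a compact perfect Hausdorff space and therefore contains a Cantor set $C_\gamma$ that is closed in $K$. By normality I shrink to open $U_\gamma'$ with $C_\gamma \subseteq U_\gamma' \subseteq \overline{U_\gamma'} \subseteq U_\gamma$, so that the closures $\overline{U_\gamma'}$ are pairwise disjoint. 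I fix continuous surjections $\phi_\gamma \colon C_\gamma \to [0,1]$, Tietze-extend each to $\tilde\phi_\gamma \colon K \to [0,1]$, and pick Urysohn cutoffs $\psi_\gamma \colon K \to [0,1]$ with $\psi_\gamma \equiv 1$ on $C_\gamma$ and $\psi_\gamma \equiv 0$ off $U_\gamma'$. For $f \in \C(K_{\Gamma, [0,1]})$ and $f_\gamma := f|_{[0,1]_\gamma}$, I define
\[
T(f)(x) := f(\infty) + \sum_{\gamma \in \Gamma} \psi_\gamma(x) \bigl[f_\gamma(\tilde\phi_\gamma(x)) - f(\infty)\bigr].
\]
The sum reduces pointwise to at most one nonzero summand. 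Linearity of $T$ is immediate; on each $C_\gamma$ one has $T(f) = f_\gamma \circ \phi_\gamma$, giving $\|T(f)\| \geq \sup_\gamma \|f_\gamma\| = \|f\|$, while on each $U_\gamma'$ the value $T(f)(x)$ is a convex combination of $f(\infty)$ and $f_\gamma(\tilde\phi_\gamma(x))$, forcing $\|T(f)\| \leq \|f\|$. Thus $T$ is an isometric embedding.

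\emph{Reverse direction.} Given an isometric embedding $T \colon \C(K_{\Gamma, [0,1]}) \hookrightarrow \C(K)$, for each $\gamma$ I consider the test function $\tau_\gamma \in \C(K_{\Gamma, [0,1]})$ equal to the identity on $[0,1]_\gamma$ and zero elsewhere. The disjoint supports yield $\|\tau_\gamma \pm \tau_\delta\| = 1$ for distinct $\gamma, \delta$, hence by isometry $\|T(\tau_\gamma) \pm T(\tau_\delta)\| = 1$. The pointwise identity $|a|+|b| = \max(|a+b|, |a-b|)$ then gives $|T(\tau_\gamma)(x)| + |T(\tau_\delta)(x)| \leq 1$ for every $x \in K$, so the open sets $W_\gamma := \{x \in K \colon |T(\tau_\gamma)(x)| > 1/2\}$ are pairwise disjoint. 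To see that each $W_\gamma$ meets $K^{(\infty)}$, I invoke Holszty\'{n}ski's theorem: $T(f) = \sigma \cdot (f \circ \rho)$ on some closed $F \subseteq K$ with continuous surjection $\rho \colon F \to K_{\Gamma, [0,1]}$ and continuous $\sigma \colon F \to \{-1, +1\}$. The clopen set $V_\gamma := \rho^{-1}([0,1]_\gamma) \subseteq F$ maps continuously onto the nonscattered $[0,1]$, so $V_\gamma$ is itself nonscattered and its perfect kernel $V_\gamma^{(\infty)}$ is a nonempty perfect subset of $K$, hence contained in $K^{(\infty)}$. Since $V_\gamma \setminus V_\gamma^{(\infty)}$ is scattered, its continuous image in $[0,1]$ is countable, so the closed set $\rho(V_\gamma^{(\infty)}) \subseteq [0,1]$ has countable complement and therefore equals $[0,1]$. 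Choosing $y \in V_\gamma^{(\infty)}$ with $\rho(y) = 1$ yields $|T(\tau_\gamma)(y)| = 1 > 1/2$, so $y \in W_\gamma \cap K^{(\infty)}$; hence $c(K^{(\infty)}, K) \geq |\Gamma|$.

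\emph{Main obstacle.} The forward direction is the delicate one: because $K_{\Gamma, [0,1]}$ is non-metrizable for uncountable $\Gamma$, Theorem~\ref{thm:mainA} does not directly deliver an isometric embedding from a closed continuous surjection onto $K_{\Gamma, [0,1]}$, and I cannot simply fall back on (ii)$\Rightarrow$(i). The explicit partition-of-unity formula for $T(f)$ is essentially unavoidable, and its technical crux is the verification of continuity of $T(f)$ at points $x \in K$ outside $\bigcup_\gamma \overline{U_\gamma'}$: for each $\epsilon > 0$ the finite exceptional set $\{\gamma \colon \|f_\gamma - f(\infty)\|_\infty > \epsilon\}$ must be separated from $x$ by a neighborhood, and the remaining terms of the sum are then controlled by the cofinite decay $\|f_\gamma - f(\infty)\|_\infty \to 0$ built into continuity of $f$ at $\infty$.
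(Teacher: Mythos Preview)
Your overall strategy is sound and close to the paper's, but two steps do not go through as written.

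\textbf{Forward direction.} The claim that a nonempty open subset of a perfect compact Hausdorff space ``therefore contains a Cantor set'' is not a theorem of ZFC: under $\diamondsuit$, Fedorchuk constructed perfect compact Hausdorff spaces with no nontrivial convergent sequences, hence with no infinite metrizable subspaces and in particular no Cantor subsets. What you actually need---and what the paper uses in its Lemma~\ref{Tietze}---is only a closed nonscattered set $C_\gamma \subseteq U_\gamma$ (take, e.g., the closure of a nonempty relatively open subset of $K^{(\infty)}$ lying inside $U_\gamma$), which by the Pe\l czy\'{n}ski--Semadeni theorem surjects continuously onto $[0,1]$. With this replacement your Tietze-plus-cutoff formula for $T$ goes through verbatim; it is essentially the construction of the paper's Lemma~\ref{Tietze} combined with Proposition~\ref{copies of C(K_Gamma_L)}.

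\textbf{Reverse direction.} The assertion that the continuous image of the scattered open set $V_\gamma \setminus V_\gamma^{(\infty)}$ in $[0,1]$ is countable is false in general: if $V_\gamma$ is the Alexandroff double of $[0,1]$ and $\rho$ is the projection, the scattered part is the uncountable discrete fibre $[0,1]\times\{1\}$ and its image is all of $[0,1]$. Your desired conclusion $\rho\big(V_\gamma^{(\infty)}\big) = [0,1]$ is nevertheless correct, via a different argument: if some nondegenerate closed interval $J$ were disjoint from $\rho\big(V_\gamma^{(\infty)}\big)$, then $\rho^{-1}(J)$ would be a compact subset of the scattered set $V_\gamma \setminus V_\gamma^{(\infty)}$, hence itself scattered, yet would map onto the nonscattered $J$---a contradiction. (A simpler fix: take $\tau_\gamma = \chi_{[0,1]_\gamma}$ instead of the identity; then any point of the nonempty set $V_\gamma^{(\infty)}$ already lies in $W_\gamma$, and no surjectivity argument is needed.) The paper handles this direction differently, invoking Proposition~\ref{isometry_and_cellularity}, which in turn quotes the proof of \cite[Theorem~1.7]{GalegoVillamizar}; your Holszty\'{n}ski-based route, once patched, is more self-contained.
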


For the proofs of extended versions of the above theorems, see Theorems \ref{Cellularity of derived sets} and \ref{Cellularity of perfect kernel}. Note that Theorem \ref{thm:mainD} can be understood as a quantitative generalization of the classical theorem of Pe\l czy\'{n}ski and Semadeni \cite{Pelczynski_Semadeni_scattered} saying that a compact space $K$ is nonscattered (i.e. $K^{(\infty)}\neq\emptyset$) if and only if $\C([0, 1])$ embeds isometrically into $\C(K)$ (see Section \ref{sec:prelim}).

Let us mention that our study is related to the area of research which investigates operators fixing copies of separable $\C(K)$-spaces. We recall that for a Banach space $E$ and compact spaces $K$, $L$, with $L$ metrizable, an operator $T \colon \C(K) \rightarrow E$ is said to \emph{fix an isometric}, resp. \emph{isomorphic}, \emph{copy} of $\C(L)$, if there exists a closed linear subspace $X$ of $\C(K)$ which is isometric, resp. isomorphic, to $\C(L)$ and such that $T$ restricted to $X$ is an isomorphism onto the image $T[X]$. Of course, if $T$ fixes a copy of $\C(L)$, then, in particular, $\C(L)$ is embedded into $\C(K)$. This area of research turned out to be very challenging and obtained results are often quite technical and intricate. They also heavily rely on the space $\C(L)$, e.g., surprisingly, the behavior of the space $\C([1, \omega^{\omega^2}])$ in this context is completely different from the one of the spaces $c_0$ and $\C([1 , \omega^{\omega}])$. For more information on the topic, see e.g. \cite{Alspach_operators_on_separable}, \cite{Bourgain1979}, \cite{Gasparis_problem_of_Rosenthal}, \cite{RosenthalC(K)}, or \cite{Wolfe_fixing_operators}.
Our main results show that when we forget about operators and only care about copies of $\C(L)$ in $\C(K)$, the situation becomes much simpler and the results can be formulated elegantly using natural topological properties of the considered compact spaces.

The paper is organized as follows. In the following preparatory section, Section \ref{sec:prelim}, we recall some standard notations and topological notions. In Section \ref{sec:aux} we prove several auxiliary lemmas. The proofs of Theorems \ref{thm:mainA} and \ref{thm:mainB} are contained in Section \ref{sec:main}. Finally, Section \ref{sec:cell} is devoted to the cellularity of Cantor--Bendixson derived sets of compact spaces, it contains in particular the proofs of Theorems \ref{thm:mainC} and \ref{thm:mainD}.

\section{Preliminaries \label{sec:prelim}}

For a set $X$ by $\abs{X}$ we denote its cardinality. $\en$ denotes the set of all positive integers, i.e. $\en=\{1,2,3,\ldots\}$. By $\omega$ we denote the first countable limit ordinal number; note that $\omega=\{0\}\cup\en$. $\omega_1$ denotes the first uncountable ordinal number and $\mathfrak{c}$ denotes the cardinality of the real line $\er$.

Throughout the paper, all compact spaces are assumed to be Hausdorff, and all Banach spaces are tacitly assumed to be real and of dimension at least one.  If $K$ is a compact space and $E$ is a Banach space, then $\C(K, E)$ stands for the Banach space of continuous $E$-valued functions on $K$ endowed with the supremum norm. Further, $\C(K)$ stands as usual for $\C(K, \mathbb{R})$. $\C(K)$ is of course also a Banach lattice and a Banach algebra with the natural pointwise operations. We set $\C(K,[0,1])=\{f\in\C(K)\colon 0\le f(x)\le 1\text{ for every }x\in K\}$. 

For each function $f\in\C(K)$, we define the \emph{support} $\spt(f)$ of $f$ as usual: $\spt(f)=\overline{\{x\in K\colon f(x)\neq0\}}$. Also, for a subset $U\subseteq K$, we say that $f$ is \emph{supported} by $U$ if $\spt(f)\subseteq U$. For each $x \in K$ we standardly set $f^+(x)=\max \{f(x), 0\}$ and $f^-(x)=\max \{-f(x), 0\}$ (so $f^-(x)\ge0$). Of course, $f^+,f^-\in\C(K)$ and $\|f\|=\max\{\|f^+\|,\|f^-\|\}$. If $A\subseteq K$, then $\chi_A$ denotes the characteristic function of $A$ in $K$.

Given Banach spaces $E$ and $F$, if $E$ and $F$ are isomorphic (resp. isometric), then we write $E\simeq F$ (resp. $E\cong F$). If there is an isomorphic embedding (resp. a (positive/PNPP\footnote{The meaning of the abbrevation \emph{PNPP} will be explained in the next section.}) isometric embedding) $T\colon E\to F$, then the image $T[E]$ is called an \emph{isomorphic copy} of $E$ in $F$ (resp. a \emph{(positive/PNPP) isometric copy}).

Cantor--Bendixson derivatives of topological spaces will constitute the main tool in our investigations. Let $S$ be a topological space. Set $S^{(0)}=S$ and  let $S^{(1)}$ be the set of all accumulation points of $S$ or, equivalently, $S^{(1)}=S\setminus\{x\in S\colon x\text{ is isolated in }S\}$. Further, for an ordinal number $\alpha>1$, let $S^{(\alpha)}=(S^{(\beta)})^{(1)}$ if $\alpha=\beta+1$, and $S^{(\alpha)}=\bigcap_{\beta<\alpha} S^{(\beta)}$ if $\alpha$ is a limit ordinal. For each ordinal $\alpha$ the set $S^{(\alpha)}$ is called the \emph{Cantor--Bendixson derivative} or the \emph{Cantor--Bendixson derived set} of $S$ of order $\alpha$. Note that for any two open sets $U\subseteq V\subseteq S$ and an ordinal $\alpha$ we have $U\cap V^{(\alpha)}\subseteq U^{(\alpha)}$, see e.g \cite[Lemma 2.3.(b)]{rondos-somaglia}.

For $S$, there exists an ordinal $\alpha$ such that for each $\beta>\alpha$ it holds $S^{(\beta)}=S^{(\alpha)}$. For such an ordinal $\alpha$, we put $S^{(\infty)}=S^{(\alpha)}$; the set $S^{(\infty)}$ is called the \emph{perfect kernel} of $S$. $S$ is said to be \emph{scattered} if $S^{(\infty)}=\emptyset$---in this case the minimal ordinal $\alpha$ such that $S^{(\alpha)}=\emptyset$ is called the \emph{height} of $S$ and is denoted by $ht(S)$. If $S$ is not scattered, then let $ht(S)=\infty$; we use the convention that $\alpha<\infty$ for each ordinal $\alpha$. It is easy to see that if $S$ is a scattered compact space, then $ht(S)$ is a successor ordinal and $S^{(ht(S)-1)}$ is a finite set. Further, it is well-known that a compact metric space is scattered if and only if it is countable. We also recall that for each ordinal number $\alpha$ and the ordinal interval $[1,\omega^{\alpha}]$, we have $([1, \omega^{\alpha}])^{(\alpha)}=\{\omega^{\alpha}\}$, which can be easily proved by transfinite induction, and thus $ht([1, \omega^{\alpha}])=\alpha+1$.

A family $\mathcal{U}$ of nonempty open subsets of $S$ is \emph{cellular} if $U\cap V=\emptyset$ for every distinct $U,V\in\mathcal{U}$. For a subset $A\subseteq S$, the \emph{relative cellularity} $c(A, S)$ is defined to be the supremum of cardinalities of cellular families $\mathcal{U}$ of $S$ such that $A\cap U\neq\emptyset$ for every $U\in\mathcal{U}$. Of course, for $A=S$, $c(A, S)$ coincides with the standard notion of the \emph{cellularity} $c(S)$ of $S$.

Recall that $S$ is said to be \emph{zero-dimensional} if $S$ admits a base consisting of clopen sets. 

We will frequently use the following characterization of nonscattered compact spaces due to Pe\l czy\'{n}ski and Semadeni \cite[Main Theorem]{Pelczynski_Semadeni_scattered}: a compact space $K$ is nonscattered if and only if $K$ maps continuously onto the unit interval $[0,1]$ if and only if $\C([0,1])$ isometrically embeds into $\C(K)$. Moreover, if $K$ is zero-dimensional, then $K$ is nonscattered if and only if $K$ maps continuously onto the standard Cantor space $\Delta$.

We will also need the classical Mazurkiewicz--Sierpi\'{n}ski classification of countable compact spaces: each countable compact space $K$ is homeomorphic to the ordinal interval $[1, \omega^{\alpha}m]$, where $\alpha=ht(K)-1$ and $m=\abs{K^{(\alpha)}}$ (see \cite[Theorem 8.6.10]{semadeni}).

\section{Auxiliary results\label{sec:aux}}

In this section we prove a number of auxiliary lemmas needed for the proofs of our main results presented in Section \ref{sec:main}. We first discuss some facts concerning the height and relative cellularity of subsets of general compact spaces. In the second subsection we provide several results concerning decompositions of continuous functions as well as pertaining to one-point compactifications of topological sums of families of compact spaces and their spaces of continuous functions. In the last subsection we show how the obtained statements apply to the case of countable compact spaces.

\subsection{Height and relative cellularity of compact spaces}

In the first simple lemma we show how the countable Cantor--Bendixson height of an open subset $U$ of a compact space $K$ naturally corresponds to the presence of a certain tree having open subsets of $U$ as nodes. We will not explicitly work with these families of open sets as with trees, but the tree structure is naturally implicitly present, with the ordering given by inclusion and incomparable elements of the tree corresponding to open sets with empty intersection. For families of clopen sets such trees have been considered e.g. by Bourgain, see \cite[page 36]{RosenthalC(K)}.   

\begin{lemma}
\label{Split}
Let $K$ be a compact space, $U$ be an open subset of $K$, and $\alpha$ be an ordinal such that $ht(U) >\alpha$. 
\begin{enumerate}[(i)]
    \item If $\alpha=\beta+1$ is a successor ordinal, then there exists a sequence $(U_n)_{n \in \en}$ of nonempty open sets with pairwise disjoint closures and such that $\overline{U_n}\subseteq U$ and $ht(U_n) \geq \alpha=\beta+1$ for each $n \in \en$.   
    \item If $\alpha$ is a limit ordinal and $(\beta_n)_{n \in \en}$ is an increasing sequence of ordinals converging to $\alpha$, then there exists a sequence $(U_n)_{n \in \en}$ of nonempty open sets with pairwise disjoint closures and such that $\overline{U_n}\subseteq U$ and $ht(U_n) \geq \beta_n+1$ for each $n \in \en$.   
\end{enumerate}
Moreover, if $K$ is zero-dimensional, then both in (i) and (ii) we may require that the sets $U_n$ are clopen.
\end{lemma}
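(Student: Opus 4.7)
The plan is to treat cases (i) and (ii) uniformly by producing a single point $x\in U$ together with a sequence of ordinals $(\gamma_n)_{n\in\en}$ such that $x$ is an accumulation point of $U^{(\gamma_n)}$ for every $n$; an inductive neighborhood-splitting around $x$ will then yield the required sequence $(U_n)$. In case (i), the hypothesis $ht(U)>\beta+1$ says $U^{(\beta+1)}\neq\emptyset$, so I simply pick any $x\in U^{(\beta+1)}$ and set $\gamma_n=\beta$ for all $n$; by the very definition of the $(\beta+1)$-st derivative, $x$ is then an accumulation point of $U^{(\beta)}$. In case (ii), I pick $x\in U^{(\alpha)}$ (which exists since $ht(U)>\alpha$) and set $\gamma_n=\beta_n$: since $\alpha$ is a limit ordinal and $\beta_n<\alpha$, we have $\beta_n+1<\alpha$, hence $x\in U^{(\alpha)}\subseteq U^{(\beta_n+1)}$, which is exactly the statement that $x$ is an accumulation point of every $U^{(\beta_n)}$.

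With $x$ and $(\gamma_n)$ fixed, I construct inductively a decreasing chain of open neighborhoods $B_0\supseteq B_1\supseteq B_2\supseteq\cdots$ of $x$, starting with $B_0=U$, as follows. Given $B_{n-1}$, the accumulation property selects $y_n\in B_{n-1}\cap U^{(\gamma_n)}$ with $y_n\neq x$; by Hausdorffness of $K$, I find disjoint open sets $A_n\ni y_n$ and $B_n\ni x$, both contained in $B_{n-1}$; and by regularity of the compact Hausdorff space $K$, I shrink $A_n$ to an open set $U_n\ni y_n$ with $\overline{U_n}\subseteq A_n$. Then $\overline{U_n}\subseteq A_n\subseteq B_{n-1}\subseteq U$, and for $m<n$ one has $\overline{U_m}\subseteq A_m$ while $\overline{U_n}\subseteq A_n\subseteq B_{n-1}\subseteq B_m$; since $A_m\cap B_m=\emptyset$, the closures $\overline{U_m}$ and $\overline{U_n}$ are disjoint. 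Finally, $y_n\in U_n\cap U^{(\gamma_n)}$, and the inclusion $U_n\cap U^{(\gamma_n)}\subseteq U_n^{(\gamma_n)}$ recalled in Section \ref{sec:prelim} yields $U_n^{(\gamma_n)}\neq\emptyset$, that is, $ht(U_n)\geq\gamma_n+1$, which is precisely the bound required in each of (i) and (ii).

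For the moreover clause, when $K$ is zero-dimensional, the only modification is to carry out the separation step within a clopen base: at each stage I take $A_n$ and $B_n$ to be disjoint clopen neighborhoods of $y_n$ and $x$ respectively, both contained in $B_{n-1}$, and set $U_n=A_n$ directly, so that $\overline{U_n}=U_n$ and the disjoint-closures requirement reduces to disjointness of the sets themselves. The main conceptual obstacle I anticipate is the limit case, where one might initially try to diagonalize across different witnessing points, one for each $\beta_n$; the observation that a single $x\in U^{(\alpha)}$ simultaneously witnesses accumulation at every level $\beta_n$ (because $\alpha$ being a limit forces $\beta_n+1<\alpha$) is what allows both cases to be handled by the same inductive neighborhood-splitting scheme and is the key technical point of the lemma.
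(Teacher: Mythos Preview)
Your proof is correct and essentially the same as the paper's. The only difference is organizational: the paper handles (i) by directly separating infinitely many points of the infinite set $U^{(\beta)}$, and reserves the inductive neighborhood-splitting around a fixed anchor point for case (ii), whereas you observe that choosing the anchor $x\in U^{(\beta+1)}$ lets the same splitting scheme cover (i) as well---a mild but pleasant unification.
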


\begin{proof}
(i) By the assumption, the set $U^{(\alpha)}$ is nonempty, and hence the set $U^{(\beta)}$ is infinite. Since $U^{(\beta)}$ is contained in the open set $U$, by a standard argument we get that there exist infinitely many points $(x_n)_{n \in \en}$ in $U^{(\beta)}$ and open subsets $(U_n)_{n \in \en}$ of $K$ with pairwise disjoint closures and such that $x_n\in U_n\subseteq \overline{U_n}\subseteq U$ for each $n\in\en$. Further, for each $n \in \en$, since $x_n \in U^{(\beta)}\cap U_n\subseteq U_n^{(\beta)}$, it follows that $ht(U_n) \geq \alpha=\beta+1$.

(ii) Let us fix a point $y \in U^{(\alpha)}$. Next, we pick a point $x_1\in U^{(\beta_1)}$, distinct from $y$. Since $U$ is open, we can find an open neighborhood $U_1$ of $x_1$ such that $\overline{U_1} \subseteq U$ and $y\not\in\overline{U_1}$. We have $x_1\in U^{(\beta_1)}\cap U_1\subseteq U_1^{(\beta_1)}$, so $ht(U_1)\ge\beta_1+1$. 
Since $y\in U^{(\alpha)}\cap (U\setminus\overline{U_1})\subseteq(U\setminus\overline{U_1})^{(\alpha)}$, we have $ht(U \setminus \overline{U_1}) \geq \alpha+1$, hence we can find $x_2 \in (U \setminus \overline{U_1})^{(\beta_2)}$, distinct from $y$. We further find an open neighborhood $U_2$ of $x_2$ such that $\overline{U_2} \subseteq (U \setminus \overline{U_1})$ and $y\not\in\overline{U_2}$. We have $x_2\in(U\setminus\overline{U_1})^{(\beta_2)}\cap U_2\subseteq U_2^{(\beta_2)}$, so $ht(U_2)\ge\beta_2+1$. We continue this inductive process until we find the desired sequence $(U_n)_{n \in \en}$ of open sets.

If $K$ is zero-dimensional, then both in (i) and (ii) for each point $x_n$ find a basic clopen neighborhood $U_n$.
\end{proof}

In what follows, we will explore how the relative cellularity of derived sets of a compact space $K$ behaves with respect to continuous surjections of closed subsets of $K$ and with respect to isometric embeddings of the space $\C(K)$.

\begin{lemma}
\label{lem_celularity}
Assume that $K$, $L$ are compact spaces such that there are a closed subset $F$ of $K$ and a continuous surjection $\rho\colon F\to L$. Then, for every open subset $U$ of $L$ and ordinal $\alpha$, if $U^{(\alpha)}\neq\emptyset$, then $(\rho^{-1}[\overline{U}])^{(\alpha)}\neq\emptyset$. 
\end{lemma}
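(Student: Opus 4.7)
The plan is transfinite induction on $\alpha$, with Lemma \ref{Split} providing exactly the tool required to feed the inductive step both at successor and at limit ordinals. The base case $\alpha=0$ is immediate: $U\neq\emptyset$ and $\rho$ is surjective, so $\rho^{-1}[\overline U]\neq\emptyset$.

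For the successor case $\alpha=\beta+1$, I would apply Lemma \ref{Split}(i) to the open set $U$ (which satisfies $ht(U)>\alpha$) to produce open sets $(U_n)_{n\in\en}$ with pairwise disjoint closures, all contained in $U$, and with $U_n^{(\beta)}\neq\emptyset$. The inductive hypothesis applied to each $U_n$ supplies a point $x_n\in(\rho^{-1}[\overline{U_n}])^{(\beta)}$. A brief monotonicity check, namely that $A\subseteq B\subseteq K$ implies $A^{(\gamma)}\subseteq B^{(\gamma)}$ for every ordinal $\gamma$, then places each $x_n$ in $(\rho^{-1}[\overline U])^{(\beta)}$. Since the sets $\overline{U_n}$ are pairwise disjoint, so are their preimages $\rho^{-1}[\overline{U_n}]$, and the points $x_n$ are therefore pairwise distinct. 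The set $(\rho^{-1}[\overline U])^{(\beta)}$ is a closed, hence compact, subset of $K$, so the infinite set $\{x_n\colon n\in\en\}$ has an accumulation point $x$ lying in it; this $x$ is not isolated in $(\rho^{-1}[\overline U])^{(\beta)}$, and therefore $x\in(\rho^{-1}[\overline U])^{(\alpha)}$.

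For limit $\alpha$, the approach is analogous. Fix any sequence $(\beta_n)_{n\in\en}$ strictly increasing to $\alpha$ and apply Lemma \ref{Split}(ii) to obtain open sets $(U_n)_{n\in\en}$ with pairwise disjoint closures, contained in $U$, and with $U_n^{(\beta_n)}\neq\emptyset$. The inductive hypothesis combined with the same monotonicity yields pairwise distinct points $x_n\in(\rho^{-1}[\overline U])^{(\beta_n)}$. Compactness of $K$ produces an accumulation point $x$ of $(x_n)_{n\in\en}$ inside the closed set $\rho^{-1}[\overline U]$. For any fixed $\gamma<\alpha$, eventually $\beta_n\ge\gamma$, so all but finitely many $x_n$ belong to the closed set $(\rho^{-1}[\overline U])^{(\gamma)}$, which must therefore contain $x$. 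Intersecting over $\gamma<\alpha$ gives $x\in(\rho^{-1}[\overline U])^{(\alpha)}$.

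The only subtlety I anticipate is the general monotonicity $A\subseteq B\Rightarrow A^{(\gamma)}\subseteq B^{(\gamma)}$ for arbitrary (not necessarily open) subsets, which the preliminaries record only in the nested-open formulation. This is settled by a short transfinite induction: at $\gamma=1$, if $a\in A^{(1)}$ then every neighborhood of $a$ meets $A\setminus\{a\}\subseteq B\setminus\{a\}$, so $a\in B^{(1)}$, while the successor and limit steps are formal. Beyond this bookkeeping, everything is driven by compactness of $K$ together with the closedness of the derived sets in question, so I expect no genuine obstacle.
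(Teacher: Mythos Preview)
Your successor step matches the paper's proof essentially word for word. The issue is the limit step. You begin by fixing ``any sequence $(\beta_n)_{n\in\en}$ strictly increasing to $\alpha$'', but such a sequence exists only when $\alpha$ has countable cofinality; for $\alpha=\omega_1$, say, there is no such sequence and Lemma~\ref{Split}(ii) does not apply. Since the lemma is stated for arbitrary ordinals $\alpha$, this is a genuine gap.

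The paper's limit step is both simpler and fully general: it does not invoke Lemma~\ref{Split}(ii) at all. One applies the inductive hypothesis \emph{to the same open set $U$} at every $\beta<\alpha$, obtaining $(\rho^{-1}[\overline U])^{(\beta)}\neq\emptyset$ for each $\beta<\alpha$. These sets form a decreasing chain of nonempty closed subsets of the compact set $\rho^{-1}[\overline U]$, so their intersection $(\rho^{-1}[\overline U])^{(\alpha)}$ is nonempty. No cofinality hypothesis, no auxiliary open sets $U_n$, and no accumulation-point argument are needed. Your more elaborate approach, even where it works, buys nothing extra; replacing it with this two-line compactness argument closes the gap and shortens the proof.
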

\begin{proof}
Fix an open subset $U$ of $L$. We will proceed by induction on the ordinal $\alpha$. The case $\alpha=0$ is clear, so assume that $\alpha>0$ and that the statement is true for each $\beta<\alpha$. Suppose that $U^{(\alpha)}\neq\emptyset$. 

If $\alpha=\beta+1$ is a successor ordinal, we use Lemma \ref{Split}.(i) to find nonempty open subsets $(U_n)_{n \in \en}$ of $L$ with pairwise disjoint closures and such that $\overline{U_n}\subseteq U$ and $U_n^{(\beta)}\neq\emptyset$ for each $n \in \en$. By the inductive assumption, $(\rho^{-1}[\overline{U_n}])^{(\beta)} \neq \emptyset$ for each $n \in \en$. Further, since the sets $\big((\rho^{-1}[\overline{U_n}])^{(\beta)}\big)_{n\in\en}$ are pairwise disjoint subsets of $\rho^{-1}[\overline{U}]$, it follows that the set $(\rho^{-1}[\overline{U}])^{(\beta)}$ contains infinitely many distinct points. Thus, since the set $\rho^{-1}[\overline{U}]$ is compact, we have $(\rho^{-1}[\overline{U}])^{(\alpha)}=(\rho^{-1}[\overline{U}])^{(\beta+1)}\neq\emptyset$.

The step when $\alpha$ is a limit ordinal is trivial. Indeed, by the inductive assumption, for each $\beta<\alpha$, we have $(\rho^{-1}[\overline{U}])^{(\beta)}\neq\emptyset$. Thus, since the collection $\{(\rho^{-1}[\overline{U}])^{(\beta)} \colon \beta<\alpha \}$ is a decreasing family of nonempty compact sets, it has nonempty intersection.
\end{proof}

\begin{prop}
\label{celularity}
Assume that $K$, $L$ are compact spaces such that there is a closed subset $F$ of $K$ which maps continuously onto $L$. Then, for each ordinal $\alpha$, we have
\[c(L^{(\alpha)}, L) \leq c(F^{(\alpha)}, F).\]
\end{prop}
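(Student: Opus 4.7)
The plan is to take an arbitrary cellular family $\mathcal{U}$ of open subsets of $L$ with $L^{(\alpha)} \cap U \neq \emptyset$ for each $U \in \mathcal{U}$, and to build a cellular family of open subsets of $F$ of the same cardinality whose members all meet $F^{(\alpha)}$; passing to the supremum over all such $\mathcal{U}$ will then give the desired inequality. For each $U \in \mathcal{U}$, the natural candidate is $V_U = \rho^{-1}[U]$, which is open in $F$, nonempty by the surjectivity of $\rho$, and the family $\{V_U : U \in \mathcal{U}\}$ is manifestly cellular since the $U$'s are pairwise disjoint. Thus the whole matter reduces to showing that each $V_U$ meets $F^{(\alpha)}$.

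To do so, I would first fix a point $p \in L^{(\alpha)} \cap U$ and invoke the regularity of $L$ to pick an open set $W$ with $p \in W \subseteq \overline{W} \subseteq U$. The fact recalled in Section~\ref{sec:prelim} that $W \cap L^{(\alpha)} \subseteq W^{(\alpha)}$ (applied to the open sets $W \subseteq L \subseteq L$) gives $p \in W^{(\alpha)}$, so in particular $W^{(\alpha)} \neq \emptyset$. Lemma~\ref{lem_celularity} applied to $W$ then yields $(\rho^{-1}[\overline{W}])^{(\alpha)} \neq \emptyset$, and since $\overline{W} \subseteq U$ we have $\rho^{-1}[\overline{W}] \subseteq V_U$.

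The last step is to see that $(\rho^{-1}[\overline{W}])^{(\alpha)} \subseteq F^{(\alpha)}$, which will place the nonempty set above inside $F^{(\alpha)} \cap V_U$. This reduces to the standard observation that whenever $A$ is a closed subspace of a Hausdorff space $X$, one has $A^{(\alpha)} \subseteq A \cap X^{(\alpha)}$ for every ordinal $\alpha$; applied here with $A = \rho^{-1}[\overline{W}]$ and $X = F$, it finishes the argument. I expect this closed-subspace observation --- provable by a routine transfinite induction, where the successor step reapplies the $\alpha=1$ case inside $F^{(\beta)}$ and the limit step is just an intersection of nested sets --- to be the only technical ingredient beyond Lemma~\ref{lem_celularity} itself, after which the proposition is immediate.
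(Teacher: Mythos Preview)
Your proposal is correct and follows essentially the same route as the paper's proof: both shrink each $U$ to an open set with closure inside $U$ meeting $L^{(\alpha)}$, invoke Lemma~\ref{lem_celularity} on that set, and then use the closed-subspace inclusion $A^{(\alpha)}\subseteq F^{(\alpha)}$ (for $A$ closed in $F$) to conclude that $\rho^{-1}[U]\cap F^{(\alpha)}\neq\emptyset$. The only difference is notational---the paper calls your $W$ by the name $V_U$ and leaves the closed-subspace inclusion implicit in a chain of containments---but the argument is the same.
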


\begin{proof}
Let $\rho \colon F \rightarrow L$ be a continuous surjection. Let $\alpha$ be an ordinal number, and let $\mathcal{U}$ be a cellular family in $L$ such that each of its members intersects the derived set $L^{(\alpha)}$. For each $U \in \mathcal{U}$, we find an open set $V_U$ in $L$ such that $\overline{V_U} \subseteq U$ and $V_U\cap L^{(\alpha)}\neq\emptyset$. 
For each $U \in \mathcal{U}$ we have $V_U^{(\alpha)}\neq\emptyset$ and the set $\rho^{-1}[\overline{V_U}]$ is closed, so, by Lemma \ref{lem_celularity}, we have
\[\emptyset\neq(\rho^{-1}[\overline{V_U}])^{(\alpha)}\subseteq\rho^{-1}[\overline{V_U}]\cap F^{(\alpha)}\subseteq\rho^{-1}[U]\cap F^{(\alpha)}.\]
Thus, since $\{ \rho^{-1}[U] \colon U \in \mathcal{U} \}$ is a cellular family in $F$, we get the required inequality $c(L^{(\alpha)}, L) \leq c(F^{(\alpha)}, F)$. 
\end{proof}

\begin{cor}\label{celularity_cor}
    Assume that $K$, $L$ are compact spaces such that there is a closed subset of $K$ which maps continuously onto $L$. Then, $ht(L) \leq ht(K)$, and if $L$ is scattered, then $\abs{L^{(ht(L)-1)}} \leq \abs{K^{(ht(L)-1)}}$.
\end{cor}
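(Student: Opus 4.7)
The plan is to derive the corollary directly from Proposition~\ref{celularity} together with the routine observation that, for any closed subset $F$ of $K$, one has $F^{(\alpha)} \subseteq K^{(\alpha)}$ for every ordinal $\alpha$. The inclusion is verified by a straightforward transfinite induction: at successor steps, an accumulation point of $F^{(\beta)}$ in $F$ is automatically an accumulation point of $K^{(\beta)}$ in $K$ (because $F^{(\beta)}\subseteq K^{(\beta)}$ by the inductive hypothesis and $F$ is closed), while at limit stages one merely intersects. I would record this inclusion as the very first step.

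For the inequality $ht(L)\le ht(K)$ I would split into two cases. If $L$ is not scattered, then, since a continuous image of a scattered compact space is scattered, any closed $F\subseteq K$ mapping onto $L$ must be nonscattered, whence $K^{(\infty)}\supseteq F^{(\infty)}\neq\emptyset$, so $ht(K)=\infty=ht(L)$. If $L$ is scattered, set $\alpha=ht(L)-1$, so that $L^{(\alpha)}\neq\emptyset$ and consequently $c(L^{(\alpha)},L)\ge 1$. Proposition~\ref{celularity} then gives $c(F^{(\alpha)},F)\ge 1$, in particular $F^{(\alpha)}\neq\emptyset$, and the inclusion $F^{(\alpha)}\subseteq K^{(\alpha)}$ yields $ht(K)\ge\alpha+1=ht(L)$.

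For the cardinality inequality, assume $L$ is scattered and set again $\alpha=ht(L)-1$, so $L^{(\alpha)}$ is finite of cardinality $m:=\abs{L^{(\alpha)}}$. I would separate the finitely many points of $L^{(\alpha)}$ by pairwise disjoint open neighborhoods in $L$, obtaining a cellular family in $L$ of size $m$ with every member meeting $L^{(\alpha)}$; hence $c(L^{(\alpha)},L)\ge m$. Another application of Proposition~\ref{celularity} supplies a cellular family $\mathcal V$ of size $m$ in $F$ with each $V\in\mathcal V$ meeting $F^{(\alpha)}$. Choosing one point $x_V\in V\cap F^{(\alpha)}\subseteq K^{(\alpha)}$ for every $V\in\mathcal V$, the cellular disjointness of $\mathcal V$ ensures that these points are distinct, which gives $\abs{K^{(\alpha)}}\ge m=\abs{L^{(\alpha)}}$.

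I do not anticipate any real obstacle: the essential content is already contained in Proposition~\ref{celularity}, and the corollary is just a matter of choosing the correct ordinal $\alpha=ht(L)-1$ and translating the cardinality of the finite top derivative $L^{(\alpha)}$ into a cellular family. The only mild subtlety to keep in mind is the inclusion $F^{(\alpha)}\subseteq K^{(\alpha)}$ for closed $F$, which is standard but must be used both to transfer nonemptiness of $F^{(\alpha)}$ upward and to produce distinct points of $K^{(\alpha)}$ in the last step.
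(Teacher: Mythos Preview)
Your proof is correct and follows essentially the same approach as the paper: both derive the height inequality and the cardinality bound from Proposition~\ref{celularity} together with the inclusion $F^{(\alpha)}\subseteq K^{(\alpha)}$ for closed $F\subseteq K$. The only minor difference is that you split the proof of $ht(L)\le ht(K)$ into the scattered and nonscattered cases, invoking the fact that continuous images of scattered compacta are scattered for the latter; the paper instead handles both cases at once, since Proposition~\ref{celularity} already gives $F^{(\alpha)}\neq\emptyset$ whenever $L^{(\alpha)}\neq\emptyset$, for every ordinal $\alpha$, so the extra case split is unnecessary.
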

\begin{proof}
Let $F$ be a closed subset of $K$ which maps continuously onto $L$. By Proposition \ref{celularity} we immediately get that $ht(L) \leq ht(F) \leq ht(K)$. Moreover, if $L$ is scattered, then the set $L^{(ht(L)-1)}$ is finite, and it is therefore clear that
\[ \abs{L^{(ht(L)-1)}}=c(L^{(ht(L)-1)}, L) \leq c(F^{(ht(L)-1)}, F) \leq \abs{F^{(ht(L)-1)}} \leq \abs{K^{(ht(L)-1)}}.\]
\end{proof}

\begin{prop}
\label{isometry_and_cellularity}
Let $K$, $L$ be compact spaces such that $\C(L)$ is isometrically embedded into $\C(K)$. Then, for each ordinal $\alpha$, $c(L^{(\alpha)}, L) \leq c(K^{(\alpha)}, K)$.
\end{prop}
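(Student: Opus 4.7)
The plan is to use the full form of Holszty\'{n}ski's theorem --- which not only produces a closed $F\subseteq K$ and a continuous surjection $\rho\colon F\to L$, but moreover a continuous $h\colon F\to\er$ with $|h|\equiv1$ on $F$ such that $Tf(x)=h(x)f(\rho(x))$ for every $x\in F$ and $f\in\C(L)$ --- together with Lemma \ref{lem_celularity} to transport any cellular family from $L$ into $K$ of the same cardinality, with each member still meeting $K^{(\alpha)}$.

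Fix an ordinal $\alpha$ and a cellular family $\{V_i\colon i\in I\}$ in $L$ with $V_i\cap L^{(\alpha)}\neq\emptyset$ for each $i$. I would first shrink each $V_i$ to an open $V_i'$ with $\overline{V_i'}\subseteq V_i$ and $V_i'\cap L^{(\alpha)}\neq\emptyset$ (so that the closures $\overline{V_i'}$ remain pairwise disjoint), and then use Urysohn's lemma to pick $f_i\in\C(L)$ with $0\le f_i\le1$, $f_i\equiv1$ on $\overline{V_i'}$, and $\spt(f_i)\subseteq V_i$. Since the $f_i$'s then have pairwise disjoint supports, for $i\neq j$ one has $\|f_i\pm f_j\|=1$, and since $T$ is an isometry, the images $g_i=Tf_i\in\C(K)$ satisfy $\|g_i\pm g_j\|=1$; the parallelogram identity then yields the pointwise bound
\[g_i(x)^2+g_j(x)^2=\tfrac12\bigl((g_i(x)+g_j(x))^2+(g_i(x)-g_j(x))^2\bigr)\le1\quad(x\in K).\]

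The candidate cellular family in $K$ is $U_i=\{x\in K\colon|g_i(x)|>1/\sqrt2\}$; the inequality above immediately forces $U_i\cap U_j=\emptyset$ whenever $i\neq j$. To verify each $U_i$ meets $K^{(\alpha)}$, I would observe that $V_i'$ is open in $L$ with $V_i'\cap L^{(\alpha)}\subseteq(V_i')^{(\alpha)}$, so Lemma \ref{lem_celularity} gives $(\rho^{-1}[\overline{V_i'}])^{(\alpha)}\neq\emptyset$. Any $q_i$ in this set lies in $K^{(\alpha)}$ (since $\rho^{-1}[\overline{V_i'}]$ is closed in $K$ and derivatives of closed subsets sit inside the ambient derivative) and satisfies $\rho(q_i)\in\overline{V_i'}$, so that $f_i(\rho(q_i))=1$ and hence $|g_i(q_i)|=|h(q_i)|\cdot1=1>1/\sqrt2$, placing $q_i$ in $U_i\cap K^{(\alpha)}$. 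Thus $\{U_i\colon i\in I\}$ is a cellular family in $K$ of cardinality $|I|$ each of whose members intersects $K^{(\alpha)}$, and taking the supremum over all such $\{V_i\}$ closes the argument.

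The hard part, and the reason this proof cannot simply be deduced by chaining Proposition \ref{celularity} with a general ``closed-subset monotonicity'' $c(F^{(\alpha)},F)\le c(K^{(\alpha)},K)$, is that such monotonicity fails in general: closed subsets of compact spaces can have strictly larger cellularity than the ambient space. The point of invoking the weighted-composition form of Holszty\'{n}ski's theorem is exactly to lift cellular structure from $F$ back into $K$ through the functions $g_i$ --- pairwise disjointness is enforced inside $K$ by the pointwise norm inequality coming from the isometry, while accumulation of order $\alpha$ is certified through $F$ via the decomposition $Tf=h\cdot(f\circ\rho)$ and Lemma \ref{lem_celularity}.
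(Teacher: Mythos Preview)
Your proof is correct, and it takes a genuinely different route from the paper's. The paper defines, for each $x\in L$, the set $\Lambda_x=\bigcap\{y\in K:|Tf(y)|>1/2\}$ (the intersection ranging over $f\in\C(L,[0,1])$ with $f(x)=1$) and then \emph{cites} the proof of \cite[Theorem~1.7]{GalegoVillamizar} for the key fact that $x\in L^{(\alpha)}$ forces $\Lambda_x\cap K^{(\alpha)}\neq\emptyset$; disjointness of the sets $V_U=\{|Tf_U|>1/2\}$ is also delegated to that reference. By contrast, you make the argument self-contained relative to the paper: you invoke the full weighted-composition form of Holszty\'nski's theorem to get the triple $(F,\rho,h)$, then use the paper's own Lemma~\ref{lem_celularity} to locate a point of $(\rho^{-1}[\overline{V_i'}])^{(\alpha)}\subseteq K^{(\alpha)}$, and the decomposition $Tf_i|_F=h\cdot(f_i\circ\rho)$ with $|h|\equiv1$ places that point inside your $U_i$. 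Your disjointness step is also different in flavor: threshold $1/\sqrt{2}$ via the parallelogram identity rather than $1/2$ (the latter works too, by a sign-case argument, which is presumably what \cite{GalegoVillamizar} does). What your approach buys is independence from the external Galego--Villamizar result, at the cost of appealing to the sharper form of Holszty\'nski; the paper's version is shorter on the page but leans on a black box for the step where the Cantor--Bendixson height is actually transferred.
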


\begin{proof}
Let $T \colon \C(L) \rightarrow \C(K)$ be an isometric embedding. For each $x \in L$  set 
\[\Lambda_x=\bigcap_{\substack{f \in \C(L, [0, 1])\\f(x)=1}}\big\{y \in K \colon \abs{T(f)(y)}>{1}/{2}\big\}.\]
By the proof of \cite[Theorem 1.7]{GalegoVillamizar}, for each ordinal $\alpha$, if $x \in L^{(\alpha)}$, then $\Lambda_x \cap K^{(\alpha)}\neq\emptyset$.

Fix an ordinal $\alpha$. Let $\mathcal{U}$ be a cellular family in $L$ such that for every $U\in\mathcal{U}$ there is a point $x_U\in U\cap L^{(\alpha)}$. For each $U\in\mathcal{U}$ we find a function $f_U \in \C(L, [0, 1])$ such that $f_U(x_U)=1$ and $f_U$ vanishes outside of the set $U$, and put 
\[V_U=\big\{y \in K \colon \abs{T(f_U)(y)} > {1}/{2}\big\}.\]
Since $T$ is an isometric embedding, the sets $\{V_U\}_{U\in\mathcal{U}}$ are pairwise disjoint (cf. again the proof of \cite[Theorem 1.7]{GalegoVillamizar}). Also, for every $U\in\mathcal{U}$, we have $\Lambda_{x_U}\subseteq V_U$, so by the above argument it holds $V_U\cap K^{(\alpha)}\neq\emptyset$. This proves the required inequality.
\end{proof}

\subsection{Decomposition of continuous functions and one-point compactifications of disjoint unions}


We will repeatedly use the following simple observation concerning families of functions. If $\{ U_{\gamma}\}_{\gamma \in \Gamma}$ is a family of pairwise disjoint nonempty subsets of a compact space $K$ and $\{g_{\gamma}\}_{\gamma \in \Gamma}$ are functions in $\C(K)$ such that, for each $\gamma \in \Gamma$, the function $g_{\gamma}$ is supported by the set $U_{\gamma}$, and we have $\{\norm{g_{\gamma}}\}_{\gamma \in \Gamma} \in c_0(\Gamma)$, then the sum $\sum_{\gamma \in \Gamma} g_{\gamma}$ is a well-defined element of $\C(K)$. Indeed, $\sum_{\gamma \in \Gamma} g_{\gamma}$ is a well-defined function on $K$, because at each point $x \in K$ there is at most one $\gamma \in \Gamma$ such that $g_{\gamma}(x) \neq 0$, and the function is continuous, since the condition $\{\norm{g_{\gamma}}\}_{\gamma \in \Gamma} \in c_0(\Gamma)$ implies that it belongs to the uniform closure of the family of continuous functions of the form $\sum_{\gamma \in F} g_{\gamma}$, where $F$ is a finite subset of $\Gamma$. 

We will frequently need the following lemma.

\begin{lemma}
\label{special form}
Let $K$ be a compact space, $a \in \er$, and $h \in \C(K, [0, 1])$ be a function of norm $1$. Further, let $\{ U_{\gamma} \}_{\gamma \in \Gamma}$ be an infinite family of pairwise disjoint open subsets of $K$ such that $U_{\gamma} \subseteq \{z \in K \colon h(z)=1\}$ for each $\gamma \in \Gamma$. Moreover, let $\{g_{\gamma}\}_{\gamma \in \Gamma}$ be a collection of functions in $\C(K)$ such that $\spt g_{\gamma} \subseteq U_{\gamma}$ for each $\gamma\in\Gamma$, and $\{ \norm{g_{\gamma}} \}_{\gamma \in \Gamma} \in c_0(\Gamma)$. Set
\[f=a h+\sum_{\gamma \in \Gamma} g_{\gamma}.\]
Then, $f\in\C(K)$, and we have
\[\norm{f^+}=\max \Big\{a+\sup_{\gamma \in \Gamma} \norm{(g_{\gamma})^+},\ 0\Big\} ,\]
\[\norm{f^-}=\max \Big\{-a+\sup_{\gamma \in \Gamma} \norm{(g_{\gamma})^-},\ 0\Big\},\]
and thus
\[\norm{f}=\max \Big\{a+\sup_{\gamma \in \Gamma} \norm{(g_{\gamma})^+},\ -a+\sup_{\gamma \in \Gamma} \norm{(g_{\gamma})^-} \Big\} .\]
\end{lemma}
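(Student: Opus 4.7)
The plan is to first pin down the pointwise formula for $f$. Since the $U_\gamma$ are pairwise disjoint, each $x \in K$ lies in at most one of them; if $x \in U_{\gamma_0}$, then $h(x) = 1$ by hypothesis and $g_\beta(x) = 0$ for every $\beta \neq \gamma_0$ (because $\spt g_\beta \subseteq U_\beta$ is disjoint from $U_{\gamma_0}$), so $f(x) = a + g_{\gamma_0}(x)$. If $x \in K \setminus \bigcup_\gamma U_\gamma$, then every $g_\gamma$ vanishes at $x$ and $f(x) = ah(x)$. That $f \in \C(K)$ is immediate from the observation recorded just before the lemma, via the $c_0$-condition on $\{\norm{g_\gamma}\}_{\gamma \in \Gamma}$.

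Setting $M^+ = \sup_\gamma \norm{(g_\gamma)^+}$, the goal for the first formula is $\norm{f^+} = \max\{a + M^+, 0\}$. The upper bound is routine: on each $U_\gamma$ we have $f \leq a + \norm{(g_\gamma)^+} \leq a + M^+$, and on $K \setminus \bigcup_\gamma U_\gamma$ we have $f = ah \leq \max\{a, 0\}$, so taking positive parts yields $f^+ \leq \max\{a + M^+, 0\}$ everywhere. For the lower bound, the case $a + M^+ \leq 0$ is trivial. If $M^+ > 0$, pick for each $\ep > 0$ some $\gamma_0$ with $\norm{(g_{\gamma_0})^+} > M^+ - \ep$; by compactness $g_{\gamma_0}$ attains this strictly positive supremum at some $x_0$, which necessarily lies in $\{g_{\gamma_0} > 0\} \subseteq \spt g_{\gamma_0} \subseteq U_{\gamma_0}$, so $f(x_0) = a + \norm{(g_{\gamma_0})^+} > a + M^+ - \ep$.

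The delicate case is $M^+ = 0$ with $a > 0$, in which every $g_\gamma$ is nonpositive and we must still realise the value $a$ in the range of $f$. Here one has to use the $c_0$-condition together with the infiniteness of the family of nonempty $U_\gamma$: for every $\ep > 0$ the set $\{\gamma \in \Gamma \colon \norm{g_\gamma} \geq \ep\}$ is finite, so one can find $\gamma'$ with $\norm{g_{\gamma'}} < \ep$ and $U_{\gamma'}$ nonempty. Any $x \in U_{\gamma'} \subseteq \{h = 1\}$ then satisfies $f(x) = a + g_{\gamma'}(x) \geq a - \norm{g_{\gamma'}} > a - \ep$, giving $\norm{f^+} \geq a - \ep$ for every $\ep > 0$, hence $\norm{f^+} \geq a$. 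This is the step I expect to be the main obstacle, since it is the only one that genuinely requires combining the three hypotheses $U_\gamma \subseteq \{h = 1\}$, $\{\norm{g_\gamma}\} \in c_0(\Gamma)$, and the infiniteness of the family.

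The formula for $\norm{f^-}$ follows by applying the already established formula for $\norm{f^+}$ to $-f = (-a)h + \sum_\gamma (-g_\gamma)$ and noting $(-g_\gamma)^+ = (g_\gamma)^-$; the final equality $\norm{f} = \max\{\norm{f^+}, \norm{f^-}\}$ is the standard identity for real-valued continuous functions on $K$.
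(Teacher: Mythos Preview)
Your proof is correct and follows essentially the same route as the paper: both establish the formula for $\norm{f^+}$ by splitting the upper bound according to whether $x$ lies in some $U_\gamma$ or not, and the lower bound into the cases $M^+>0$ (attain the supremum inside some $U_\gamma$) and $M^+=0$ (use the $c_0$-condition to find a $U_\gamma$ with $\norm{g_\gamma}<\ep$), then declare the $\norm{f^-}$ case analogous. The only cosmetic differences are that the paper uses attainment of $\sup_\gamma\norm{(g_\gamma)^+}$ directly (legitimate since $\{\norm{(g_\gamma)^+}\}\in c_0(\Gamma)$) where you run an $\ep$-argument, and that you make explicit the need for infinitely many nonempty $U_\gamma$, which the paper uses tacitly.
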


\begin{proof}
By the discussion preceding the lemma, $f\in\C(K)$. We only prove the equality
\[\norm{f^+}=\max \Big\{a+\sup_{\gamma \in \Gamma} \norm{(g_{\gamma})^+},\ 0\Big\},\]
as the argument for the other one is analogous. 
Let $x \in K$. If $x \in U_{\gamma}$ for some $\gamma \in \Gamma$, then $h(x)=1$ and so
\[f^+(x)=\max \{f(x), 0\}=\max \big\{ah(x)+g_{\gamma}(x), 0\big\} \leq \max \Big\{a+\sup_{\gamma \in \Gamma} \norm{(g_{\gamma})^+},\ 0\Big\}.\]
If, on the other hand, $x \in K \setminus \bigcup_{\gamma \in \Gamma} U_{\gamma}$, then 
\[f^+(x)=\max \{ah(x), 0\}\leq \max \{a, 0\}\leq \max \Big\{a+\sup_{\gamma \in \Gamma} \norm{(g_{\gamma})^+},\ 0 \Big\}. \]
This proves the inequality $\leq$.

For the proof of the inequality $\geq$, let $C=\sup_{\gamma \in \Gamma} \norm{(g_{\gamma})^+}$.  We may assume that $a+C \geq 0$, otherwise the required inequality holds trivially. We distinguish two cases. 

If $C=0$, then $a \geq 0$. Further, for each $\ep>0$ there exists $\gamma \in \Gamma$ such that $\norm{g_{\gamma}}<\ep$. Thus, if we pick an arbitrary point $x \in U_{\gamma}$ (so $h(x)=1$), we get 
\[\norm{f^+}\ge f^+(x)=\max \{f(x), 0\} \geq \max \{ah(x)-\ep, 0\} \geq a+C-\ep,\]
which, since $\ep$ can be taken arbitrarily small, proves the required inequality. 

If $C>0$, then we fix $\gamma \in \Gamma$ and a point $x \in U_{\gamma}$ (so again $h(x)=1$) such that $C=g^+_{\gamma}(x)=g_{\gamma}(x)$. Then, 
\[\norm{f^+}\ge f^+(x)=\max\{f(x),0\}=\max\{ah(x)+g_\gamma(x),0\}=\]
\[=\max\{a+C,0\}\ge a+C,\]
which finishes the proof of the inequality $\geq$.
\end{proof}

We now consider one-point compactifications of disjoint unions of families of compact spaces, for which we need to introduce a piece of notation. 
Let $\Gamma$ be a nonempty set and $\{ L_{\gamma}\}_{\gamma \in \Gamma}$ be a family of nonempty compact spaces. If $\Gamma$ is finite, then by $K_{\{L_{\gamma}\}_{\gamma \in \Gamma}}$ we denote the disjoint union of the spaces $L_{\gamma}$, and if $\Gamma$ is infinite, then $K_{\{L_{\gamma}\}_{\gamma \in \Gamma}}$ denotes the one-point compactification of the disjoint union of the spaces $L_{\gamma}$ with the new added point denoted simply by $\infty$. 
In the case when there is a compact space $L$ such that $L_{\gamma}=L$ for each $\gamma \in \Gamma$, we denote $K_{\{L_{\gamma}\}_{\gamma \in \Gamma}}$ simply by $K_{\Gamma, L}$. We identify the spaces $L_{\gamma}$ with subspaces of $K_{\{L_{\gamma}\}_{\gamma \in \Gamma}}$ in the natural way. Also, since the spaces $L_{\gamma}$ are clopen in the space $K_{\{L_{\gamma}\}_{\gamma \in \Gamma}}$, for each $\eta\in\Gamma$, we may naturally identify every function $f \in \C(L_{\eta})$ with the function from the space $\C(K_{\{L_{\gamma}\}_{\gamma \in \Gamma}})$ which coincides with $f$ on $L_{\eta}$ and is constantly $0$ on the rest of the space $K_{\{L_{\gamma}\}_{\gamma \in \Gamma}}$. This gives us the natural isometric embedding of $\C(L_\eta)$ into $\C(K_{\{L_{\gamma}\}_{\gamma \in \Gamma}})$. 

The following lemma describes a useful way how a continuous function on $K_{\{L_{\gamma}\}_{\gamma \in \Gamma}}$ can be naturally split in the spirit of Lemma \ref{special form} into a constant part and a certain $c_0(\Gamma)$-combination of functions defined on the spaces $L_{\gamma}$, $\gamma\in\Gamma$. For the case of ordinal intervals, this idea has already appeared in Rosenthal \cite[page 12]{RosenthalC(K)}.   

\begin{lemma}
\label{Form}
Let $\Gamma$ be an infinite set, $\{L_{\gamma}\}_{\gamma \in \Gamma}$ be a family of nonempty compact spaces, and $g \in \C(K_{\{L_{\gamma}\}_{\gamma \in \Gamma}})$. Then, $g$ may be uniquely written in the form 
    \[g=a_g\mathbf{1}+\sum_{\gamma \in \Gamma} g_{\gamma},\]
     where $a_g \in \er$, $\mathbf{1}$ is the constant-$1$ function on the space $K_{\{L_{\gamma}\}_{\gamma \in \Gamma}}$, for each $\gamma \in \Gamma$ we have $g_{\gamma}\in\C(L_{\gamma})$, and it holds $\{ \norm{g_{\gamma}} \}_{\gamma \in \Gamma} \in c_0(\Gamma)$.
\end{lemma}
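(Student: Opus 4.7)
My plan is to define the decomposition explicitly and then check both existence (together with the $c_0$-condition) and uniqueness.

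For the existence, I would set $a_g := g(\infty)$, and for each $\gamma \in \Gamma$ define $g_\gamma \in \C(L_\gamma)$ as the restriction $(g - a_g \mathbf{1})\restriction_{L_\gamma}$, viewed as a function on $K_{\{L_\gamma\}_{\gamma\in\Gamma}}$ extended by $0$ outside $L_\gamma$ under the natural identification described before the lemma. The only nontrivial thing is to verify $\{\|g_\gamma\|\}_{\gamma \in \Gamma} \in c_0(\Gamma)$; once this is known, the discussion preceding Lemma \ref{special form} guarantees that $\sum_{\gamma \in \Gamma} g_\gamma$ is a well-defined element of $\C(K_{\{L_\gamma\}_{\gamma\in\Gamma}})$, and then the identity $g = a_g \mathbf{1} + \sum_{\gamma \in \Gamma} g_\gamma$ is immediate by checking the values pointwise: on each $L_\eta$ it holds by construction, and at $\infty$ both sides equal $a_g$ because every $g_\gamma$ vanishes at $\infty$.

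The key step, and the main obstacle, is the $c_0$-condition. Fix $\ep>0$. By continuity of $g$ at $\infty$ there exists an open neighborhood $U$ of $\infty$ with $|g(y) - a_g|<\ep$ for all $y\in U$. By the definition of the one-point compactification, $K_{\{L_\gamma\}_{\gamma\in\Gamma}}\setminus U$ is a compact subset of the disjoint union $\bigsqcup_{\gamma\in\Gamma} L_\gamma$; since $\{L_\gamma\}_{\gamma\in\Gamma}$ is an open cover of this disjoint union by pairwise disjoint clopen sets, the compact set $K_{\{L_\gamma\}_{\gamma\in\Gamma}}\setminus U$ meets only finitely many $L_\gamma$. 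Hence for all but finitely many $\gamma\in\Gamma$ we have $L_\gamma\subseteq U$ and therefore
\[
\|g_\gamma\| = \sup_{x\in L_\gamma} |g(x)-a_g| \le \ep,
\]
which gives $\{\|g_\gamma\|\}_{\gamma\in\Gamma}\in c_0(\Gamma)$ as required.

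For uniqueness, suppose $g = a\mathbf{1} + \sum_{\gamma\in\Gamma} h_\gamma$ with $a\in\er$, $h_\gamma\in\C(L_\gamma)$ (extended by $0$), and $\{\|h_\gamma\|\}_{\gamma\in\Gamma}\in c_0(\Gamma)$. By the same discussion preceding Lemma \ref{special form}, the function $\sum_{\gamma\in\Gamma} h_\gamma$ lies in $\C(K_{\{L_\gamma\}_{\gamma\in\Gamma}})$ and, as a uniform limit of finite sums, vanishes at $\infty$. Evaluating $g$ at $\infty$ therefore forces $a = g(\infty) = a_g$. Restricting to any particular $L_\eta$ (where all $h_\gamma$ with $\gamma\neq\eta$ vanish) then gives $h_\eta = (g - a_g\mathbf{1})\restriction_{L_\eta} = g_\eta$, proving uniqueness and completing the proof.
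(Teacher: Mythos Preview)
Your proof is correct and follows essentially the same approach as the paper: set $a_g=g(\infty)$, define $g_\gamma=g|_{L_\gamma}-a_g\chi_{L_\gamma}$, deduce the $c_0$-condition from continuity of $g$ at $\infty$, and obtain uniqueness from the pairwise disjointness of the $L_\gamma$ together with $\{\infty\}$. The paper records these steps more tersely, but the argument is the same.
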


\begin{proof}
We simply put $a_g=g(\infty)$ and, for each $\gamma \in \Gamma$, let  $g_{\gamma}=g|_{L_{\gamma}}-a_g \chi_{L_{\gamma}}$. From the continuity of $g$ at the point $\infty$ it follows that $\{\norm{g_{\gamma}}\}_{\gamma \in \Gamma} \in c_0(\Gamma)$. Moreover, the pairwise disjointness of the family $\{L_\gamma\}_{\gamma\in\Gamma}\cup\{\{\infty\}\}$ implies that this representation is unique.
\end{proof}

To proceed further, we will need the following two auxiliary definitions.

\begin{definition}
For compact spaces $K$ and $L$, an open subset $U$ of $K$, and a linear embedding $T$ of $\C(L)$ into $\C(K)$, we say that $T$ is \emph{supported by} $U$ if $\spt(T(f)) \subseteq U$ for each $f \in \C(L)$.
\end{definition}


\begin{definition}
If $K$, $L$ are compact spaces and $T \colon \C(L) \rightarrow \C(K)$ is an isometric embedding, we say that $T$ \emph{preserves norms of positive parts} (or, shortly, is \emph{PNPP}), if $\norm{f^+}=\norm{(T(f))^+}$ for each $f \in \C(K)$. 
\end{definition}

Of course, if an isometric embedding $T\colon\C(L)\to\C(K)$ is PNPP, then it preserves also the norms of negative parts, that is, $\norm{f^-}=\norm{(T(f))^-}$ for each $f \in \C(L)$. Further, since a continuous function is positive if and only if the norm of its negative part is $0$, it follows that a PNPP embedding $T \colon \C(L) \rightarrow \C(K)$ is, in particular, a positive operator between the Banach lattices $\C(L)$ and $\C(K)$. On the other hand, each isometric embedding which is a Banach lattice isomorphism onto its image is PNPP. We will use these facts frequently.

The following lemma stands behind our motivation to introduce the above notion. It shows that PNPP isometric embeddings work nicely in the class of spaces $\C(K_{\{L_{\gamma}\}_{\gamma \in \Gamma}})$, which is not true for isometric embeddings which are not PNPP.

\begin{prop}
\label{copies of C(K_Gamma_L)}
Let $K$ be a compact space, $U$ be an open subset of $K$, and $h \in \C(K, [0, 1])$ be a function of norm $1$ supported by $U$. Further, for an infinite set $\Gamma$,  let $\{L_{\gamma}\}_{\gamma \in \Gamma}$ be a family of nonempty compact spaces, and $\{U_{\gamma}\}_{\gamma \in \Gamma}$ be a family of pairwise disjoint nonempty open subsets of $K$ such that $U_{\gamma} \subseteq \{x \in K \colon h(x)=1\}$ for each $\gamma \in \Gamma$. Assume also that for each $\gamma \in \Gamma$ there is a PNPP isometric embedding $T_\gamma\colon \C(L_{\gamma})\to\C(K)$ which is supported by $U_{\gamma}$.

Then, there is a PNPP isometric embedding $T\colon\C(K_{\{L_{\gamma}\}_{\gamma \in \Gamma}})\to\C(K)$ which is supported by $U$.
\end{prop}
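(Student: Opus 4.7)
The natural candidate for $T$ is the one suggested by Lemma \ref{Form}: given $g \in \C(K_{\{L_{\gamma}\}_{\gamma \in \Gamma}})$, uniquely decompose it as $g = a_g \mathbf{1} + \sum_{\gamma \in \Gamma} g_{\gamma}$ with $a_g \in \er$, $g_{\gamma}\in\C(L_{\gamma})$ and $\{\|g_\gamma\|\}_{\gamma\in\Gamma}\in c_0(\Gamma)$, and set
\[
T(g) \;=\; a_g\, h \;+\; \sum_{\gamma\in\Gamma} T_\gamma(g_\gamma).
\]
First I would check that $T$ is well-defined and $\C(K)$-valued: since each $T_\gamma$ is isometric, $\|T_\gamma(g_\gamma)\|=\|g_\gamma\|$, so $\{\|T_\gamma(g_\gamma)\|\}_{\gamma\in\Gamma}\in c_0(\Gamma)$, and since $\spt T_\gamma(g_\gamma)\subseteq U_\gamma$ with the $U_\gamma$ pairwise disjoint, the series defines a continuous function by the discussion preceding Lemma \ref{special form}. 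Linearity of $T$ is immediate from the uniqueness part of Lemma \ref{Form} and the linearity of each $T_\gamma$.

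Next I would verify the support condition. By assumption $h$ is supported by $U$, and each $T_\gamma(g_\gamma)$ is supported by $U_\gamma\subseteq\{x: h(x)=1\}\subseteq\spt(h)\subseteq U$, so $\spt(T(g))\subseteq U$.

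The core of the proof is the norm identity, and here the hypothesis that each $T_\gamma$ is PNPP plugs directly into Lemma \ref{special form}. Applying that lemma on $K$ (with the given $h$, the sets $U_\gamma$, and the functions $T_\gamma(g_\gamma)$, which satisfy all the hypotheses) gives
\[
\|(T(g))^+\| = \max\Big\{a_g + \sup_{\gamma}\|(T_\gamma(g_\gamma))^+\|,\ 0\Big\},
\]
and similarly for the negative part. Since each $T_\gamma$ is PNPP, $\|(T_\gamma(g_\gamma))^\pm\| = \|g_\gamma^\pm\|$, hence
\[
\|(T(g))^+\| = \max\Big\{a_g + \sup_{\gamma}\|g_\gamma^+\|,\ 0\Big\}.
\]
On the other hand, Lemma \ref{special form} applies equally well on the space $K_{\{L_\gamma\}_{\gamma\in\Gamma}}$ with $a=a_g$, the constant function $\mathbf{1}$ in place of $h$, and the clopen sets $L_\gamma$ in place of $U_\gamma$; it yields exactly the same expression for $\|g^+\|$. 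Therefore $\|(T(g))^+\|=\|g^+\|$, and the analogous equality for negative parts, which together give both that $T$ is PNPP and, via $\|\cdot\|=\max\{\|\cdot^+\|,\|\cdot^-\|\}$, that $T$ is an isometry.

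The main (mild) obstacle is really just the verification that Lemma \ref{special form} is applicable on both sides of the identity; once the hypotheses of that lemma are matched up, the PNPP assumption on the $T_\gamma$'s is precisely what forces the two expressions to coincide. Without PNPP one would only get the weaker inequality $\|T(g)\|\le\|g\|$ together with crossterm defects, which is why the notion is introduced in the first place.
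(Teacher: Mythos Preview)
Your proposal is correct and follows essentially the same route as the paper: define $T$ via the decomposition of Lemma~\ref{Form}, and then apply Lemma~\ref{special form} twice---once on $K_{\{L_\gamma\}_{\gamma\in\Gamma}}$ and once on $K$---using the PNPP hypothesis on the $T_\gamma$'s to match the two expressions. You additionally spell out the support verification $\spt(T(g))\subseteq U$, which the paper leaves implicit.
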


\begin{proof}
We define the desired isometric embedding $T \colon \C(K_{\{L_{\gamma}\}_{\gamma \in \Gamma}}) \rightarrow \C(K)$ in the following way. Given $g \in \C(K_{\{L_{\gamma}\}_{\gamma \in \Gamma}})$, we write it in the form
\[g=a_g\mathbf{1}+\sum_{\gamma \in \Gamma} g_{\gamma}\]
as in Lemma \ref{Form}. 
Then, we define 
\[T(g)= a_g h +\sum_{\gamma \in \Gamma} T_{\gamma}(g_{\gamma}).\]
First, we note that the mapping $T$ is well-defined, which follows from the fact that there is only one way to express each $g$ in the above form. Further, since 
$\{\norm{T_{\gamma}(g_\gamma)}\}_{\gamma \in \Gamma}=\{\norm{g_\gamma}\}_{\gamma \in \Gamma}\in c_0(\Gamma)$ and $\spt(T_\gamma(g_\gamma))\subseteq U_\gamma$ for each $\gamma\in\Gamma$, $T(g)$ belongs to the space $\C(K)$ by Lemma \ref{special form}. Also, it should be clear that $T$ is linear. 

It remains to check that $T$ is a PNPP isometric embedding. To this end, by using Lemma \ref{special form} twice, first in the space $\C(K_{\{L_{\gamma}\}_{\gamma \in \Gamma}})$ and then in the space $\C(K)$, for each $g\in\C(K_{\{L_{\gamma}\}_{\gamma \in \Gamma}})$ we have 
\[\norm{g}=\max \Big\{a_g+\sup_{\gamma \in \Gamma} \norm{(g_{\gamma})^+},\ -a_g+\sup_{\gamma \in \Gamma} \norm{(g_{\gamma})^-} \Big\}=\]  
\[=\max \Big\{a_g+\sup_{\gamma \in \Gamma} \norm{(T_{\gamma}(g_{\gamma}))^+},\ -a_g+\sup_{\gamma \in \Gamma} \norm{(T_{\gamma}(g_{\gamma}))^-} \Big\}  
=
\norm{T(g)},\]
and hence $T$ is an isometric embedding.

Further, $T$ is PNPP, since again by using Lemma \ref{special form} twice, for each $g\in\C(K_{\{L_{\gamma}\}_{\gamma \in \Gamma}})$ we get 
\[\norm{g^+}=\max \Big\{a_g+\sup_{\gamma \in \Gamma} \norm{(g_{\gamma})^+},\ 0\Big\}  =\]
\[=\max \Big\{a_g+\sup_{\gamma \in \Gamma} \norm{(T_{\gamma}(g_{\gamma}))^+},\ 0\Big\}     =\norm{(Tg)^+}.\]
\end{proof}

Next, we show a way of constructing continuous surjection onto spaces $K_{\{L_{\gamma}\}_{\gamma \in \Gamma}}$.

\begin{lemma}
\label{Mapping onto K_Gamma_L}
Let $K$ be a compact space, $\Gamma$ be an infinite set, $\{L_{\gamma}\}_{\gamma \in \Gamma}$ be a family of nonempty compact spaces, and $\{U_{\gamma}\}_{\gamma \in \Gamma}$ be a family of pairwise disjoint clopen subsets of $K$  such that $U_{\gamma}$ maps continuously onto $L_{\gamma}$ for each $\gamma \in \Gamma$. Then, $K$ maps continuously onto $K_{\{L_{\gamma}\}_{\gamma \in \Gamma}}$. 
\end{lemma}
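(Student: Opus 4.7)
The plan is to define the desired surjection $\rho\colon K\to K_{\{L_{\gamma}\}_{\gamma\in\Gamma}}$ piecewise. For each $\gamma\in\Gamma$ fix a continuous surjection $\rho_{\gamma}\colon U_{\gamma}\to L_{\gamma}$ and set
\[
\rho(x)=\begin{cases}\rho_{\gamma}(x)&\text{if }x\in U_{\gamma}\text{ for some }\gamma\in\Gamma,\\ \infty&\text{if }x\in K\setminus\bigcup_{\gamma\in\Gamma}U_{\gamma}.\end{cases}
\]
This is well defined because the clopen sets $U_{\gamma}$ are pairwise disjoint, and the second clause is nonvacuous: if $K$ were equal to $\bigcup_{\gamma\in\Gamma}U_{\gamma}$, then compactness of $K$ together with disjointness of the open cover $\{U_{\gamma}\}_{\gamma\in\Gamma}$ would force $\Gamma$ to be finite (after discarding empty sets), contradicting the assumption that $\Gamma$ is infinite and each $U_\gamma$ is nonempty. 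Surjectivity is then immediate: for every $\gamma\in\Gamma$, $\rho[U_{\gamma}]=L_{\gamma}$, and $\infty$ is attained at any point of $K\setminus\bigcup_{\gamma}U_{\gamma}$.

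For continuity at a point $x\in U_{\gamma}$ I would use that $U_{\gamma}$ is a clopen neighborhood of $x$ in $K$ on which $\rho$ coincides with the continuous map $\rho_{\gamma}$, while $L_{\gamma}$ sits as a clopen subspace of $K_{\{L_{\gamma}\}_{\gamma\in\Gamma}}$. Hence continuity at such $x$ reduces to the continuity of $\rho_{\gamma}$ at $x$.

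The only slightly delicate point is continuity at a point $x$ with $\rho(x)=\infty$. Here I would use the observation that any compact subset $C$ of $\bigsqcup_{\gamma\in\Gamma}L_{\gamma}\subseteq K_{\{L_{\gamma}\}_{\gamma\in\Gamma}}$ meets only finitely many $L_{\gamma}$'s, since the $L_{\gamma}$'s form a pairwise disjoint clopen cover of $\bigsqcup_{\gamma\in\Gamma}L_{\gamma}$. Consequently, the sets
\[
V_F\,=\,K_{\{L_{\gamma}\}_{\gamma\in\Gamma}}\setminus\bigcup_{\gamma\in F}L_{\gamma},\qquad F\subseteq\Gamma\text{ finite},
\]
form a cofinal family in the neighborhood filter of $\infty$. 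Since $\rho^{-1}[V_F]=K\setminus\bigcup_{\gamma\in F}U_{\gamma}$ is clopen in $K$ and contains $x$, continuity at $x$ follows: given any open neighborhood $V$ of $\infty$, pick $F$ with $V_F\subseteq V$ and note that $\rho^{-1}[V_F]$ is an open neighborhood of $x$ mapped into $V$.

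I do not expect any genuine obstacle here; the only subtlety worth highlighting is the compactness argument ensuring that the fiber over $\infty$ is nonempty and, dually, that neighborhoods of $\infty$ are controlled by finitely many indices, which matches perfectly with the clopen structure of the $U_{\gamma}$'s in $K$.
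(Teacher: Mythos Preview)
Your proposal is correct and follows essentially the same route as the paper: the paper defines $\rho$ by the identical piecewise formula and verifies continuity by computing preimages of open sets (splitting into the cases $\infty\notin V$ and $\infty\in V$, where in the latter case one finds a finite $F\subseteq\Gamma$ with $L_\gamma\subseteq V$ for all $\gamma\notin F$), which is equivalent to your pointwise argument using the basic neighborhoods $V_F$ of $\infty$. Your explicit compactness remark that $K\setminus\bigcup_\gamma U_\gamma\neq\emptyset$ is a detail the paper leaves implicit under ``it is clear that $\rho$ is onto''.
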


\begin{proof}
For each $\gamma \in \Gamma$ we fix a continuous surjection $\rho_{\gamma} \colon U_{\gamma} \rightarrow L_{\gamma}$. Let the mapping $\rho \colon K \rightarrow K_{\{L_{\gamma}\}_{\gamma \in \Gamma}}$ be defined for every $x \in K$ as
\[\rho(x)=\begin{cases} \rho_{\gamma}(x),& \text{if }x \in U_{\gamma}\text{ for some }\gamma\in\Gamma,\\
\infty,& \text{if }x \in K \setminus \bigcup_{\gamma \in \Gamma} U_{\gamma}.
\end{cases}\]
It is clear that $\rho$ is onto, thus we only need to check that it is continuous. To this end, we pick an arbitrary open subset $V$ of $K_{\{L_{\gamma}\}_{\gamma \in \Gamma}}$. Assuming that $\infty$ is not an element of $V$, we have
\[\rho^{-1}[V]=\rho^{-1}\Big[\bigcup_{\gamma \in \Gamma} (V \cap L_{\gamma})\Big]= \bigcup_{\gamma \in \Gamma} \rho_{\gamma}^{-1}[V \cap L_{\gamma}],\]
so $\rho^{-1}[V]$ is an open subset of $K$. 
If, on the other hand, $\infty$ belongs to $V$, we find a finite set $F \subseteq \Gamma$ such that $L_\gamma \subseteq V$ for each $\gamma \in \Gamma \setminus F$. Then, we have 
\[\rho^{-1}[V]=\rho^{-1}\Big[\{\infty\} \cup \bigcup_{\gamma \in \Gamma} (V \cap L_{\gamma})\Big]=\]
\[=\rho^{-1}(\infty) \cup \bigcup_{\gamma \in F} \rho^{-1}[V \cap L_{\gamma}] \cup \bigcup_{\gamma \in \Gamma \setminus F} \rho^{-1}[L_{\gamma}]=\]
\[=\big(K \setminus \bigcup_{\gamma \in \Gamma} U_{\gamma}\big) \cup \bigcup_{\gamma \in F} \rho_{\gamma}^{-1}[V \cap L_{\gamma}] \cup \bigcup_{\gamma \in \Gamma \setminus F} U_{\gamma}=\]
\[=\big(K \setminus \bigcup_{\gamma \in F} U_{\gamma}\big) \cup \bigcup_{\gamma \in F} \rho_{\gamma}^{-1}[V \cap L_{\gamma}],\]
so $\rho^{-1}[V]$ is again an open subset of $K$ (recall that each $U_\gamma$ is clopen). The proof is thus finished. 
\end{proof}

\subsection{Countable compact spaces}

In this subsection we first show how countable compact spaces are naturally inductively built by  taking one-point compactifications of disjoint unions of ordinal intervals, which will later allow us to apply the results from the previous subsection. 

We start with the following lemma.

\begin{lemma}
\label{description of omega^alpha}
Let $\alpha$ be an ordinal.
\begin{enumerate}[(i)]
    \item If $\alpha=\beta+1$ is a successor ordinal, then the space $[1, \omega^{\alpha}]$ is homeomorphic to $K_{\en, [1, \omega^{\beta}]}$.
    \item If $\alpha$ is a limit ordinal and $(\beta_n)_{n \in \en}$ is an increasing sequence of ordinals converging to $\alpha$, then the space $[1, \omega^{\alpha}]$ is homeomorphic to $K_{\{[1, \omega^{\beta_n}]\}_{n \in \en}}$.
\end{enumerate}
\end{lemma}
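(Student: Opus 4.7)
The plan is to exhibit an explicit homeomorphism in both cases by decomposing the ordinal interval $[1,\omega^{\alpha}]$ into a countable family of pairwise disjoint clopen subintervals that accumulate only at $\omega^{\alpha}$, and identifying each subinterval with the required ordinal space via an order-isomorphism (which is automatically a homeomorphism of the order topologies).

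For part (i), using $\omega^{\beta+1}=\omega^{\beta}\cdot\omega$, I would set $I_{0}=[1,\omega^{\beta}]$ and $I_{n}=(n\omega^{\beta},(n+1)\omega^{\beta}]$ for $n\geq 1$; each such $I_n$ is clopen in $[1,\omega^{\alpha}]$, and the translation $n\omega^{\beta}+\xi\mapsto\xi$ is an order-isomorphism $I_{n}\to[1,\omega^{\beta}]$. For part (ii), I would take $I_{1}=[1,\omega^{\beta_{1}}]$ and $I_{n}=(\omega^{\beta_{n-1}},\omega^{\beta_{n}}]$ for $n\geq 2$, and invoke the standard absorption identity $\omega^{\gamma}+\omega^{\delta}=\omega^{\delta}$ (valid whenever $\gamma<\delta$) to conclude that $I_{n}$ has order type $\omega^{\beta_{n}}$, hence is order-isomorphic to $[1,\omega^{\beta_{n}}]$. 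Again each $I_n$ is clopen in $[1,\omega^{\alpha}]$.

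With these pieces in hand I would define the candidate map by sending $\omega^{\alpha}$ to the point at infinity of the one-point compactification and mapping each $I_{n}$ to its designated copy via the order-isomorphism above. Bijectivity, continuity, and openness on each $I_{n}$ are immediate since the pieces are clopen in both spaces. The only step needing verification is continuity (and openness) at $\omega^{\alpha}$: given a basic neighborhood $(\xi,\omega^{\alpha}]$ of $\omega^{\alpha}$, one uses that $\omega^{\alpha}=\sup_{n} n\omega^{\beta}$ in case (i), respectively $\omega^{\alpha}=\sup_{n}\omega^{\beta_{n}}$ in case (ii), to find $n_{0}$ with $\xi$ strictly less than the $n_{0}$-th term; then $(\xi,\omega^{\alpha}]$ contains $I_{n}$ for all $n\geq n_{0}$, which is precisely a basic neighborhood of $\infty$ in the one-point compactification. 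Running the same argument in reverse shows openness at $\omega^{\alpha}$. The only genuine subtlety in the whole proof is the ordinal-arithmetic absorption property used to identify the pieces in (ii); everything else is routine manipulation of the order topology on compact ordinal intervals.
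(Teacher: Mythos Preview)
Your argument is correct and follows essentially the same decomposition as the paper: partition $[1,\omega^{\alpha}]$ into clopen subintervals accumulating only at $\omega^{\alpha}$, identify each with the appropriate copy via an order-isomorphism, and send $\omega^{\alpha}$ to $\infty$. The one substantive difference lies in part (ii): the paper justifies the homeomorphism $[\omega^{\beta_{n-1}}+1,\omega^{\beta_n}]\cong[1,\omega^{\beta_n}]$ by invoking the Mazurkiewicz--Sierpi\'nski classification, whereas you obtain an explicit order-isomorphism from the absorption identity $\omega^{\gamma}+\omega^{\delta}=\omega^{\delta}$ for $\gamma<\delta$. Your route is more elementary (it avoids a nontrivial classification theorem) and also slightly more general, since Mazurkiewicz--Sierpi\'nski applies only to countable compacta while the lemma as stated allows any limit ordinal of countable cofinality.

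One notational caution: ordinal multiplication is not commutative, and in fact $n\cdot\omega^{\beta}=\omega^{\beta}$ for finite $n\ge1$ and $\beta\ge1$. You clearly intend the $n$-fold sum $\omega^{\beta}+\cdots+\omega^{\beta}=\omega^{\beta}\cdot n$, so write $\omega^{\beta}\cdot n$ (as the paper does) rather than $n\omega^{\beta}$ to avoid any ambiguity.
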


\begin{proof}
(i) If $\alpha=\beta+1$, we fix for each $n \in \en$ a (surjective) homeomorphism
\[\rho_n \colon\big[1, \omega^{\beta}\big] \rightarrow \big[\omega^{\beta}(n-1)+1, \omega^{\beta}n\big].\]
Then, we define the mapping $\rho \colon K_{\en, [1, \omega^{\beta}]} \rightarrow [1, \omega^{\alpha}]$ for every $x\in K_{\en, [1, \omega^{\beta}]}$ by
\[\rho(x)=\begin{cases} \rho_n(x),&\text{ if } x \text{ lies in the } n\text{-th copy of } \big[1, \omega^{\beta}\big] \text{ in the space } K_{\en, [1, \omega^{\beta}]},\\
\omega^{\alpha},&\text{ if }x=\infty.
\end{cases}\]
It is clear that $\rho$ is surjective and it is elementary to check that it is a homeomorphism (cf. the proof of Lemma \ref{Mapping onto K_Gamma_L}).

(ii) If $\alpha$ is a limit ordinal and $(\beta_n)_{n \in \en}$ is an increasing sequence of ordinals converging to $\alpha$, we additionally define $\beta_0=0$ and for each $n \in \en$ we fix a (surjective) homeomorphism
\[\rho_n \colon \big[1, \omega^{\beta_n}\big] \rightarrow \big[\omega^{\beta_{n-1}}+1, \omega^{\beta_n}\big]\]
(note that all the above spaces are homeomorphic by the Mazurkiewicz--Sierpi\'{n}ski theorem). Analogously as above, we define
\[\rho \colon K_{\{[1, \omega^{\beta_n}]\}_{n \in \en}} \rightarrow \big[1, \omega^{\alpha}\big]\]
for every $x\in K_{\{[1, \omega^{\beta_n}]\}_{n \in \en}}$ by
\[\rho(x)=\begin{cases} \rho_n(x),&\text{ if } x \in \big[1, \omega^{\beta_n}\big]\text{ for some }n\in\en,\\
\omega^{\alpha},&\text{ if }x=\infty,
\end{cases}\]
and it is again simple to check that $\rho$ is a surjective homeomorphism.
\end{proof}

We are ready to prove the key ingredient of the proof of our main result about isometric embeddings, Theorem \ref{main-isometric}.

\begin{prop}
\label{Copies}
Let $K$ be a compact space, $U$ be an open subset of $K$, $m \in \en$, and $\alpha$ be a countable ordinal such that $\abs{U^{(\alpha)}} \geq m$. Then, there exists a PNPP isometric embedding $T\colon\C([1, \omega^{\alpha}m])\to\C(K)$ which is supported by $U$.  
\end{prop}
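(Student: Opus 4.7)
\emph{Proof plan.} I would argue by transfinite induction on the countable ordinal $\alpha$, proving the conclusion for every $m\in\en$ simultaneously. For each fixed $\alpha$ the step decomposes into two parts: first, the single-block case $m=1$, which constructs a PNPP isometric copy of $\C([1,\omega^\alpha])$ supported by an open set whose $\alpha$-th derivative is nonempty; second, a short disjoint-sum step that passes from $m=1$ to arbitrary $m$ via the clopen decomposition of $[1,\omega^\alpha m]$ into $m$ disjoint copies of $[1,\omega^\alpha]$. The base case $\alpha=0$ is immediate: $\C([1,m])\cong\er^m$ with the supremum norm, and since $\abs{U}\ge m$ and $K$ is compact Hausdorff, one easily produces $m$ pairwise disjoint open subsets of $U$ with pairwise disjoint closures in $U$ carrying Urysohn bumps $h_1,\dots,h_m\in\C(K,[0,1])$ of norm one; the assignment $(a_i)\mapsto\sum_ia_ih_i$ is a PNPP isometric embedding supported by $U$.

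For the single-block inductive step, I would assume the statement for all $\beta<\alpha$ and let $V$ be open with $V^{(\alpha)}\ne\emptyset$. Using normality twice, I would shrink $V$ to open sets $V'\subseteq V''$ with $\overline{V'}\subseteq V''\subseteq\overline{V''}\subseteq V$ and $V'^{(\alpha)}\ne\emptyset$ (exploiting $W\cap V^{(\alpha)}\subseteq W^{(\alpha)}$ for open $W\subseteq V$, recalled in Section~\ref{sec:prelim}), and apply Urysohn to produce $h\in\C(K,[0,1])$ of norm $1$ with $h\equiv1$ on $\overline{V'}$ and $h\equiv0$ off $V''$, so that $h$ is supported by $V$. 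If $\alpha=\beta+1$, Lemma~\ref{Split}(i) applied to $V'$ supplies open sets $(V_n)$ with pairwise disjoint closures in $V'$ and $V_n^{(\beta)}\ne\emptyset$; the induction hypothesis yields PNPP isometric embeddings $T_n\colon\C([1,\omega^\beta])\to\C(K)$ supported by $V_n\subseteq V'\subseteq\{h=1\}$, and Proposition~\ref{copies of C(K_Gamma_L)} combined with the homeomorphism $K_{\en,[1,\omega^\beta]}\cong[1,\omega^\alpha]$ from Lemma~\ref{description of omega^alpha}(i) assembles them into a PNPP isometric embedding of $\C([1,\omega^\alpha])$ supported by $V$. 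If $\alpha$ is a countable limit, I would instead fix a strictly increasing sequence $(\beta_n)$ with $\beta_n\nearrow\alpha$, apply Lemma~\ref{Split}(ii) to obtain $(V_n)$ with $V_n^{(\beta_n)}\ne\emptyset$, and carry out the identical assembly using Lemma~\ref{description of omega^alpha}(ii) in place of (i).

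The passage to general $m$ is then routine. Given $U$ with $\abs{U^{(\alpha)}}\ge m$, pick distinct points $y_1,\dots,y_m\in U^{(\alpha)}$ and produce pairwise disjoint open sets $U_1,\dots,U_m\subseteq U$ with $y_i\in U_i$ and pairwise disjoint closures $\overline{U_i}\subseteq U$; each satisfies $U_i^{(\alpha)}\ne\emptyset$, so the single-block case yields PNPP isometric embeddings $T_i\colon\C([1,\omega^\alpha])\to\C(K)$ supported by $U_i$, and the clopen decomposition of $[1,\omega^\alpha m]$ into $m$ copies of $[1,\omega^\alpha]$ lets one define $T(f)=\sum_iT_i(f_i)$, which is PNPP and isometric because the supports of the $T_i(f_i)$ are pairwise disjoint. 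I expect the main technical point to be the bookkeeping in the single-block step: the function $h$ must simultaneously have norm $1$, be supported by $V$, and equal $1$ on an open set containing every $V_n$, so that Proposition~\ref{copies of C(K_Gamma_L)} applies and the resulting embedding remains supported by $V$. Once this double shrinking is correctly arranged, the rest of the argument is a systematic invocation of the auxiliary results of Section~\ref{sec:aux}.
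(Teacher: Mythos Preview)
Your proposal is correct and follows essentially the same approach as the paper's proof: transfinite induction on $\alpha$ for the single-block case $m=1$, using Lemma~\ref{Split} to split, the induction hypothesis to embed the pieces, and Proposition~\ref{copies of C(K_Gamma_L)} together with Lemma~\ref{description of omega^alpha} to assemble, followed by a disjoint-sum argument for general $m$. The only cosmetic difference is that the paper performs a single shrinking (one $V$ with $\overline{V}\subseteq U$ and then chooses $h$ with $\overline{V}\subseteq\{h=1\}\subseteq\spt h\subseteq U$), whereas you shrink twice to $V'\subseteq V''$; the content is identical.
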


\begin{proof}
We first assume that $m=1$ and prove this case by transfinite induction. The case of $\alpha=0$ is simple: it is enough to find a function $h \in \C(K, [0, 1])$, of norm $1$ and supported by $U$, and then  to consider the mapping $T \colon \C(\{1\}) \cong \er \rightarrow \C(K)$ defined for every $a\in\er$ by $T(a)=a h$. 

So, let us assume that $\alpha$ is a nonzero ordinal and that the statement holds for each $\beta < \alpha$. We fix a point $y \in U^{(\alpha)}$ and let $V$ be an open neighborhood of $y$ such that $\overline{V}\subseteq U$. Let $h \in \C(K, [0, 1])$ be a function such that
\[\overline{V}\subseteq\{z \in K \colon h(z)=1\}\subseteq\spt h \subseteq U.\]
Note that $y\in V\cap U^{(\alpha)}\subseteq V^{(\alpha)}$, so $ht(V)>\alpha$.

If $\alpha=\beta+1$ is a successor ordinal, we use Lemma \ref{Split} to find pairwise disjoint nonempty open sets $(U_n)_{n \in \en}$ such that $ht(U_n)\ge\alpha$ and $\overline{U_n}\subseteq V$ for each $n\in\en$. Next, by the inductive assumption, for each $n\in\en$, we find a PNPP isometric embedding $T_n\colon\C([1, \omega^{\beta}])\to\C(K)$ which is supported by $U_n$. Then, an application of Proposition \ref{copies of C(K_Gamma_L)} yields a PNPP isometric embedding $S\colon\C(K_{\en, [1, \omega^{\beta}]})\to\C(K)$ supported by $V$. Since $\C(K_{\en, [1, \omega^{\beta}]})$ is lattice isometric to $\C([1, \omega^{\alpha}])$ by Lemma \ref{description of omega^alpha}, composing the isometries yields a PNPP isometric embedding $T\colon\C([1, \omega^{\alpha}])\to\C(K)$ supported by $V$ and so by $U$, and this finishes the successor case.

If $\alpha$ is a limit ordinal, we proceed similarly as above. First, we fix an increasing sequence of ordinals $(\beta_n)_{n \in \omega}$ converging to $\alpha$ (which exists as $\alpha$ is countable), with $\beta_0=0$. By Lemma \ref{Split} we can find pairwise disjoint nonempty open sets $(U_n)_{n \in \en}$ such that $ht(U_n)\ge\beta_n+1$ and $\overline{U_n}\subseteq V$ for each $n\in\en$. By the inductive assumption, for each $n \in \en$, we find a PNPP isometric embedding $T_n\colon\C([1, \omega^{\beta_n}])\to\C(K)$ which is supported by $U_n$. Again by Proposition \ref{copies of C(K_Gamma_L)}, we obtain a PNPP isometric embedding $S\colon\C(K_{\{[1, \omega^{\beta_n}]\}_{n \in \en}})\to\C(K)$ supported by $V$. Since $\C(K_{\{[1, \omega^{\beta_n}]\}_{n \in \en}})$ is lattice isometric to $\C([1, \omega^{\alpha}])$ again by Lemma \ref{description of omega^alpha}, composing the isometries we get a PNPP isometric embedding $T\colon\C([1, \omega^{\alpha}])\to\C(K)$ which is supported by $V$ and so by $U$, and thus the limit case is also done.

Finally, if $m>1$, we fix distinct points $y_1, \ldots, y_m  \in U^{(\alpha)}$ and we further find pairwise disjoint open sets $(U_i)_{i=1}^m$ such that $y_i\in U_i\subseteq U$ for each $i=1,\ldots,m$. Then, by the above argument, for each $i=1,\ldots,m$, there exists a PNPP isometric embedding 
\[T_i\colon\C([1, \omega^{\alpha}]) \cong \C([\omega^{\alpha}(i-1)+1, \omega^{\alpha}i])\to\C(K)\]
which is supported by $U_i$. It is then clear that there exists a PNPP isometric embedding $T\colon\C([1, \omega^{\alpha}m])\to\C(K)$ which is supported by $U$.
\end{proof}

\begin{remark}
In the process of proving Proposition \ref{Copies} we made use of the fact that, for each countable ordinal $\alpha$, the space $\C([1, \omega^{\alpha}])$ can be naturally viewed as a $c_0$-combination of spaces of continuous functions on smaller intervals, plus the constant part (cf. Lemmas \ref{Form} and \ref{description of omega^alpha}). Another possibile argument for the proof would be to use the tree description of the space $\C([1, \omega^{\alpha}])$, which has been considered, for example, by Bourgain \cite[Section 2]{Bourgain1979} (see also \cite[page 35]{RosenthalC(K)}), in which case we would show that the closed linear span of the functions which we inductively constructed in Proposition \ref{Copies} is isometric to a suitable tree. That approach seems to require a proof of similar difficulty as the one given above, but it would have a disadvantage of being a little bit less transparent, since, e.g., it would rely on \cite[Lemma 4]{Bourgain1979}, which is stated without a proof. 
\end{remark}

The key ingredient of the proof of Theorem \ref{main-isometric-zero} reads as follows. 

\begin{prop}
\label{ContinuousMaps}
Let $K$ be a zero-dimensional compact space, $U$ be a clopen subset of $K$, $m \in \en$, and $\alpha$ be a countable ordinal such that $\abs{U^{(\alpha)}} \geq m$. Then, $U$ maps continuously onto $[1, \omega^{\alpha}m]$.
\end{prop}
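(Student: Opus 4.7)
The plan is to mirror the proof of Proposition~\ref{Copies}, with continuous surjections onto $[1,\omega^\alpha]$ playing the role of PNPP isometric embeddings of $\C([1,\omega^\alpha])$ into $\C(K)$, and with the zero-dimensionality of $K$ being used to ensure that the open sets produced by Lemma~\ref{Split} can be taken clopen.

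First, I would handle the case $m=1$ by transfinite induction on $\alpha$. The base case $\alpha=0$ is trivial: any constant map of $U$ onto $\{1\}=[1,\omega^0]$ works. For the successor step $\alpha=\beta+1$, apply the clopen version of Lemma~\ref{Split}(i) to $U$ to obtain pairwise disjoint clopen sets $(U_n)_{n\in\en}$ with $U_n\subseteq U$ and $U_n^{(\beta)}\neq\emptyset$; the inductive hypothesis supplies continuous surjections $\rho_n\colon U_n\to[1,\omega^\beta]$; Lemma~\ref{Mapping onto K_Gamma_L}, applied to $U$ with the family $(U_n)$, assembles these into a continuous surjection $U\to K_{\en,[1,\omega^\beta]}$ (the clopen complement $U\setminus\bigcup_n U_n$ being sent to $\infty$), and Lemma~\ref{description of omega^alpha}(i) identifies the codomain with $[1,\omega^\alpha]$. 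The limit step is entirely analogous: fix an increasing sequence $(\beta_n)_{n\in\en}$ converging to $\alpha$, use Lemma~\ref{Split}(ii) to produce clopen $(U_n)$ with $ht(U_n)\geq\beta_n+1$, and combine the inductive hypothesis with Lemma~\ref{Mapping onto K_Gamma_L} and Lemma~\ref{description of omega^alpha}(ii).

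For $m>1$, pick distinct points $y_1,\ldots,y_m\in U^{(\alpha)}$ and pairwise disjoint clopen neighborhoods $V_1,\ldots,V_m\subseteq U$ of them, available by zero-dimensionality. The stated monotonicity $V_i\cap U^{(\alpha)}\subseteq V_i^{(\alpha)}$ for open $V_i\subseteq U$ gives $y_i\in V_i^{(\alpha)}$, so the $m=1$ case produces continuous surjections $V_i\to[\omega^\alpha(i-1)+1,\omega^\alpha i]$. Combining these with the constant map that sends the clopen complement $U\setminus\bigcup_{i=1}^{m}V_i$ to the point $1$ gives a continuous surjection $U\to[1,\omega^\alpha m]$, since $[1,\omega^\alpha m]$ is the disjoint topological sum of $m$ clopen copies of $[1,\omega^\alpha]$. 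I do not expect any serious obstacle: every ingredient has been prepared in the preceding subsections, and the only point requiring mild care, namely that the complements arising at each inductive step are clopen and therefore do not break continuity, is immediate from the zero-dimensional refinement built into Lemma~\ref{Split}.
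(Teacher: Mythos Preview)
Your proposal is correct and follows essentially the same route as the paper's proof: transfinite induction for $m=1$ using the clopen refinement in Lemma~\ref{Split}, then Lemma~\ref{Mapping onto K_Gamma_L} and Lemma~\ref{description of omega^alpha}, and finally a clopen partition for $m>1$. The only cosmetic difference is that in the $m>1$ step the paper arranges the clopen pieces $U_1,\ldots,U_m$ to cover $U$, whereas you allow a clopen remainder and send it to the point $1$; both variants work, and the one small point you leave implicit---that $U\setminus\bigcup_n U_n$ is nonempty in the inductive step, needed for surjectivity onto $\infty$---is immediate from compactness of $U$.
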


\begin{proof}
We first assume that $m=1$ and we proceed by transfinite induction. Trivially, if $U$ is nonempty, then it maps continuously onto $\{1\}$, so we may assume that $\alpha>0$ and that the statement holds for every $\beta<\alpha$.

We first assume that $\alpha=\beta+1$ is a successor ordinal. Since $U^{(\alpha)}\neq\emptyset$, by Lemma \ref{Split}.(i) we may find pairwise disjoint clopen subsets $(U_n)_{n \in \en}$ of $U$ such that $U_n^{(\beta)} \neq \emptyset$ for each $n \in \en$. By the inductive assumption, for each $n \in \en$,  $U_n$ maps continuously onto $[1, \omega^{\beta}]$. Since $K$ is compact,  $K\neq\bigcup_{n\in\en}U_n$. Thus, $U$ maps continuously onto $K_{\en, [1, \omega^{\beta}]}$ by Lemma \ref{Mapping onto K_Gamma_L} and so onto $[1, \omega^{\alpha}]$ by  Lemma \ref{description of omega^alpha}.(i).

Likely, if $\alpha$ is a limit ordinal, we fix an increasing sequence of ordinals $(\beta_n)_{n \in \en}$ converging to $\alpha$ (which exists as $\alpha$ is countable), and we use Lemma \ref{Split}.(ii) to find pairwise disjoint nonempty clopen subets $(U_n)_{n \in \en}$ of $U$ such that $ht(U_n) \geq \beta_n+1$ for each $n \in \en$. By the assumption, for each $n \in \en$, $U_n$ maps continuously onto $[1, \omega^{\beta_{n}}]$. Also, $K\neq\bigcup_{n\in\en}U_n$. Then, as above, by Lemmas \ref{Mapping onto K_Gamma_L} and \ref{description of omega^alpha}, $U$ maps continuously onto $[1, \omega^{\alpha}]$.

Finally, if $m>1$, we fix distinct points $y_1, \ldots, y_m  \in U^{(\alpha)}$ and we further find for each $i=1, \ldots m$ a clopen subset $U_i$ of $U$ containing $y_i$ (so $U_i^{(\alpha)}\neq\emptyset$) and such that the sets $(U_i)_{i=1}^m$ are pairwise disjoint and $\bigcup_{i=1}^{m} U_i=U$. By the above argument, for each $i=1, \ldots, m$, we find a continuous surjection $\rho_i\colon U_i\to[\omega^{\alpha}(i-1)+1, \omega^{\alpha}i]$. 
Now, we define the continuous surjection $\rho \colon U \rightarrow [1, \omega^{\alpha}m]$ for every $x\in U$ by setting $\rho(x)=\rho_i(x)$ if $x \in U_i$ for some $i=1,\ldots,m$. The proof is finished.
\end{proof}

\section{The main results\label{sec:main}}

We are in the position to prove the main results of this paper. We start with the characterizations of the presence of an isometric copy of $\C(L)$ in $\C(K)$, when $L$ is metrizable---Theorems \ref{main-isometric} and \ref{main-isometric-zero}. The main items in these characterizations are (ii), (iii), (iv), and (ix), since they 
reduce the question of isometric embeddings to simple topological relations between the considered compact spaces. However, the other items in the characterizations are also of some independent interest on their own.

\begin{thm}
   \label{main-isometric}
For a compact metric space $L$ and a compact space $K$, the following assertions are equivalent:
\begin{enumerate}[(i)]
    \item $\C(K)$ contains an isometric copy of $\C(L)$,
    \item there exists a closed subset of $K$ which maps continuously onto $L$,
    \item $\abs{L^{(\alpha)}} \leq \abs{K^{(\alpha)}}$ for each ordinal $\alpha$,
    \item if $L$ is nonscattered, then $K$ is nonscattered, and if $L$ is scattered, then $\abs{L^{(ht(L)-1)}} \leq \abs{K^{(ht(L)-1)}}$,
    \item $\C(K)$ contains a positive isometric copy of $\C(L)$,
    \item there is an isomorphic embedding $T \colon \C(L) \rightarrow \C(K)$ with $\norm{T} \norm{T^{-1}} <3$,
    \item for each Banach space $E$, $\C(K, E)$ contains an isometric copy of $\C(L)$,
    \item there exists a strictly convex Banach space $E$ such that $\C(K, E)$ contains an isometric copy of $\C(L)$.
\end{enumerate}
\end{thm}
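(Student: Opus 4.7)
The plan is to establish a circle of implications passing through $(v), (i), (vi), (iii), (iv), (v)$, with the items $(ii), (vii), (viii)$ attached to it. Most individual arrows are either trivial or follow immediately from the classical theorems of Holszty\'{n}ski, Gordon, Mazurkiewicz--Sierpi\'{n}ski and Pe\l czy\'{n}ski--Semadeni, together with the machinery from Section \ref{sec:aux}. The whole weight of the argument sits on the single implication $(iv) \Rightarrow (v)$.

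I would first dispatch the easy links. The implications $(v) \Rightarrow (i) \Rightarrow (vi)$ are trivial, and $(iii) \Rightarrow (iv)$ follows by specialization of $\alpha$ (noting that when $L$ is nonscattered then $L^{(\alpha)} \neq \emptyset$ for every $\alpha$, which forces $K$ nonscattered). Gordon's theorem yields $(vi) \Rightarrow (iii)$, while Holszty\'{n}ski's theorem yields $(i) \Rightarrow (ii)$, and $(ii) \Rightarrow (iv)$ is precisely Corollary \ref{celularity_cor}. For the vector-valued items, $(v) \Rightarrow (vii)$ comes from the canonical isometric embedding $\C(K) \hookrightarrow \C(K,E)$ sending $f$ to $f(\cdot)\, e$ for a fixed unit vector $e \in E$, and $(vii) \Rightarrow (viii)$ is immediate (take $E = \ell^2$, for instance). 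To close the cycle I would use $(viii) \Rightarrow (iii)$, which should follow from a vector-valued analogue of Gordon's theorem for strictly convex ranges, along the same lines as the Galego--Villamizar-style argument used in the proof of Proposition \ref{isometry_and_cellularity}: strict convexity is precisely the ingredient that keeps the derivation-respecting behaviour of isometric embeddings of $\C$-spaces alive in the vector-valued setting.

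The heart of the argument is $(iv) \Rightarrow (v)$, which I would split into two cases. If $L$ is scattered, then by the Mazurkiewicz--Sierpi\'{n}ski classification $L$ is homeomorphic to $[1, \omega^{\alpha} m]$ with $\alpha = ht(L)-1$ and $m = |L^{(\alpha)}|$, and hypothesis (iv) becomes $|K^{(\alpha)}| \geq m$, so Proposition \ref{Copies} applied with $U = K$ delivers a PNPP (hence positive) isometric embedding $\C([1, \omega^{\alpha} m]) \to \C(K)$. If $L$ is nonscattered, then (iv) forces $K$ to be nonscattered, the Pe\l czy\'{n}ski--Semadeni theorem provides a continuous surjection $K \to [0,1]$, and composition yields a positive isometric embedding $\C([0,1]) \hookrightarrow \C(K)$; since the compact metric space $L$ is a continuous image of the Cantor set $\Delta \subseteq [0,1]$, and Pe\l czy\'{n}ski's classical extension theorem for closed metric subspaces gives a positive norm-one linear extension $\C(\Delta) \to \C([0,1])$, concatenating the four maps produces the required positive isometric copy of $\C(L)$ in $\C(K)$.

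The scattered subcase of $(iv) \Rightarrow (v)$ is the main obstacle. Although encapsulated in a single call to Proposition \ref{Copies}, its proof is itself a transfinite induction driven by the one-point-compactification gluing result (Proposition \ref{copies of C(K_Gamma_L)}) and the tree-like structure of ordinal intervals (Lemma \ref{description of omega^alpha}). The critical structural notion in the induction is the PNPP property: preserving the norm of the positive part is exactly what makes the decomposition of a function on $K_{\{L_\gamma\}_{\gamma \in \Gamma}}$ as a constant plus a $c_0$-combination (Lemma \ref{Form}) transport faithfully through the gluing step via Lemma \ref{special form}, whereas mere positivity of the isometric embedding would not suffice.
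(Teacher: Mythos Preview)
Your overall architecture matches the paper's: the main cycle runs $(v)\Rightarrow(i)\Rightarrow(vi)\Rightarrow(iii)\Rightarrow(iv)\Rightarrow(v)$ with $(ii)$, $(vii)$, $(viii)$ attached, and the real work sits in $(iv)\Rightarrow(v)$, handled via Proposition~\ref{Copies} in the scattered case and via Pe\l czy\'nski--Semadeni plus a positive isometric embedding of $\C(L)$ into $\C([0,1])$ in the nonscattered case. Your nonscattered argument (factoring through the Cantor set $\Delta\subseteq[0,1]$ and a positive norm-one extension operator $\C(\Delta)\to\C([0,1])$) is a correct explicit version of what the paper simply cites as Rosenthal \cite[Theorem~2.8]{RosenthalC(K)}.

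The one soft spot is your closure of the $(viii)$-branch. You propose $(viii)\Rightarrow(iii)$ via ``a vector-valued analogue of Gordon's theorem for strictly convex ranges'', but you give neither a precise statement nor a citation, and ``should follow'' is not a proof. The paper closes this loop by a different, directly citable route: Cambern's vector-valued Holszty\'nski theorem \cite{Cambern-vector_Holsztynski}, which asserts that if $\C(L)$ embeds isometrically into $\C(K,E)$ with $E$ strictly convex, then $L$ is a continuous image of a closed subset of $K$. This yields $(viii)\Rightarrow(ii)$ outright, and since you already have $(ii)\Rightarrow(iv)$ via Corollary~\ref{celularity_cor}, the cycle closes. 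Replacing your speculative $(viii)\Rightarrow(iii)$ arrow with Cambern's $(viii)\Rightarrow(ii)$ repairs the argument with no other changes needed.
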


\begin{figure}\[\xymatrixcolsep{4pc}\xymatrix{
\\
\mathrm{\,i\,}\ar@{:>}[r]\ar@{:>}@<-2pt>[dr]&    \mathrm{\,vii\,}\ar@{:>}[r]&\mathrm{\,viii\,}\ar@{=>}[r]^{\text{\ Cambern \cite{Cambern-vector_Holsztynski}}}&   \mathrm{ii}\ar@{=>}[r]^{\text{Corollary \ref{celularity_cor}\ \ \vphantom{[}}}&     \mathrm{iv}\ar@{=>}[r]\ar@{=>}[d]&     \mathrm{v}\ar@{:>}@/_3pc/@<-2pt>[lllll]\\
&     \mathrm{vi}\ar@{=>}[r]^{\text{\,Gordon \cite{Gordon3} }}&    \mathrm{iii}\ar@{:>}@<-2pt>[urr]&     &   \mathrm{ix}\ar@{:>}[r]&     \mathrm{x}\ar@{:>}[u]\\
}\]
\vspace{0mm}
\begin{center}\captionof{figure}[aaaaaa]{The scheme of the proofs of Theorems \ref{main-isometric} and \ref{main-isometric-zero}. Dashed arrows indicate trivial implications. \label{diagram}}
\end{center}\end{figure}
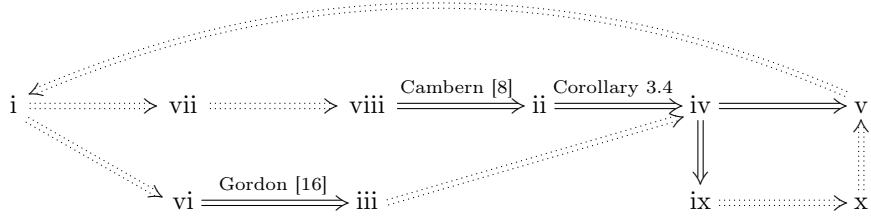

\begin{proof}
For the scheme of the proof, see Figure \ref{diagram}. 

Implications (v)$\Rightarrow$(i), (i)$\Rightarrow$(vi), (i)$\Rightarrow$(vii), (vii)$\Rightarrow$(viii), and (iii)$\Rightarrow$(iv) are trivial. 

As mentioned in the introduction, implication (vi)$\Rightarrow$(iii) follows from the theorem of Gordon \cite{Gordon3}. Also, (viii)$\Rightarrow$(ii) follows from Cambern's generalization \cite{Cambern-vector_Holsztynski} of the Holszty\'{n}ski theorem, asserting that if $\C(L)$ is isometrically embedded into the space $\C(K, F)$, where $F$ is a strictly convex Banach space, then $L$ is a continuous image of a closed subset of $K$. 

(ii)$\Rightarrow$(iv) follows from Corollary \ref{celularity_cor}.

(iv)$\Rightarrow$(v) First, we assume that $L$ is not scattered, so $K$ is also not scattered. Thus, $K$ maps continuously onto $[0, 1]$, and hence $\C(K)$ contains a positive isometric copy of $\C([0, 1])$. Further, by Rosenthal \cite[Theorem 2.8]{RosenthalC(K)} (a result based on the proof of the classical Miljutin theorem),  $\C([0, 1])$ contains a positive isometric copy of $\C(L)$. Hence, $\C(K)$ contains a positive isometric copy of $\C(L)$. 

If, on the other hand, $L$ is scattered, then it is countable, so, by the  Mazurkiewicz--Sierpi\'{n}ski classification, $L$ is homeomorphic to $[1, \omega^{\alpha}m]$, where $\alpha=ht(L)-1$ is countable and $m=\abs{L^{(\alpha)}}$. Thus, $\C(L)$ is lattice isometric to $\C([1, \omega^{\alpha}m])$. Hence, if $\abs{K^{(\alpha)}} \geq \abs{L^{(\alpha)}}=m$, then, by Proposition \ref{Copies} (for $U=K$), $\C(K)$ contains a positive isometric copy of $\C([1, \omega^{\alpha}m])$ and thus a positive isometric copy of $\C(L)$. 
\end{proof}

The following theorem provides additional equivalent conditions in the case when $K$ is zero-dimensional.

\begin{thm}\label{main-isometric-zero}
For a compact metric space $L$ and a compact zero-dimensional space $K$, assertions (i)--(viii) of Theorem \ref{main-isometric} are additionally equivalent to the following ones:
\begin{enumerate}[(i)]\setcounter{enumi}{8}
    \item $K$ maps continuously onto $L$,
    \item there exists an isometric embedding of $\C(L)$ into $\C(K)$ which is also an embedding in the sense of Banach lattices and Banach algebras.
\end{enumerate}
\end{thm}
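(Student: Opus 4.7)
The plan is to close the equivalence circle by adding the two new items (ix) and (x) to the chain established in Theorem \ref{main-isometric}. Since the claim is that (i)--(viii), (ix), (x) are all equivalent under the additional zero-dimensionality hypothesis on $K$, it suffices to exhibit the implications (iv)$\Rightarrow$(ix)$\Rightarrow$(x)$\Rightarrow$(v), because (v)$\Rightarrow$(i) and the remaining implications have already been proved. The genuinely new content is (iv)$\Rightarrow$(ix); the other two are routine.

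For (iv)$\Rightarrow$(ix), I would split into the two cases dictated by the statement. If $L$ is nonscattered, then (iv) gives that $K$ is nonscattered, and since $K$ is zero-dimensional the Pe\l czy\'{n}ski--Semadeni characterization (recalled at the end of Section \ref{sec:prelim}) produces a continuous surjection of $K$ onto the Cantor space $\Delta$. Any compact metric space, and in particular $L$, is a continuous image of $\Delta$, so composing the two surjections yields a continuous map of $K$ onto $L$. If instead $L$ is scattered, then $L$ is countable, and by the Mazurkiewicz--Sierpi\'{n}ski classification $L$ is homeomorphic to $[1,\omega^{\alpha}m]$ where $\alpha=ht(L)-1$ is countable and $m=|L^{(\alpha)}|$. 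The assumption in (iv) gives $|K^{(\alpha)}|\ge m$, so applying Proposition \ref{ContinuousMaps} with the clopen set $U=K$ produces a continuous surjection of $K$ onto $[1,\omega^{\alpha}m]\cong L$.

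The implication (ix)$\Rightarrow$(x) is immediate: a continuous surjection $\rho\colon K\to L$ induces the composition operator $T\colon\C(L)\to\C(K)$, $T(f)=f\circ\rho$, which is an isometric embedding and preserves both lattice and algebra operations pointwise. Finally, (x)$\Rightarrow$(v) is trivial, since any isometric embedding that is also a Banach-lattice embedding is in particular a positive operator, hence a positive isometric copy in the sense of (v).

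I do not expect a serious obstacle here: the auxiliary Proposition \ref{ContinuousMaps} has been set up precisely for the scattered case, and the nonscattered case reduces to two standard classical facts. The only point requiring a little care is to remember that we need $K=U$ to be clopen to invoke Proposition \ref{ContinuousMaps}, which is automatic because $K$ itself is always clopen in $K$.
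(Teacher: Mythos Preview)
Your proposal is correct and follows essentially the same route as the paper: the paper likewise reduces everything to the chain (iv)$\Rightarrow$(ix)$\Rightarrow$(x)$\Rightarrow$(v), handling (iv)$\Rightarrow$(ix) by the same case split (Pe\l czy\'{n}ski--Semadeni plus the universality of $\Delta$ in the nonscattered case, Mazurkiewicz--Sierpi\'{n}ski plus Proposition~\ref{ContinuousMaps} with $U=K$ in the scattered case) and declaring the remaining two implications trivial.
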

\begin{proof}
Implications (ix)$\Rightarrow$(x) and (x)$\Rightarrow$(Theorem \ref{main-isometric}.(v)) are trivial. We show implication (Theorem \ref{main-isometric}.(iv))$\Rightarrow$(ix). To this end, if $L$ is not scattered, then $K$ is also not scattered, so it follows that $K$ maps continuously onto the Cantor space $\Delta$. And it is well-known that $\Delta$ maps continuously onto any  metrizable compact space, in particular, onto $L$.  

If $L$ is scattered, then, by the Mazurkiewicz--Sierpi\'{n}ski theorem, $L$ is homeomorphic to $[1, \omega^{\alpha}m]$, where $\alpha=ht(L)-1$ is countable and $m=\abs{L^{(\alpha)}}$, and so the assertion follows by Proposition \ref{ContinuousMaps} (for $U=K$), as $\abs{K^{(\alpha)}} \geq \abs{L^{(\alpha)}}=m$.
\end{proof}

We continue with several remarks concerning Theorems \ref{main-isometric} and \ref{main-isometric-zero}. The first one pertains to the proof of Theorem \ref{main-isometric}.

\begin{remark}\label{remark:first}
    (1) If the space $K$ is nonscattered, then item (v) always holds for any $L$, regardless of whether $L$ is scattered or not. Indeed, if $K$ is nonscattered, then, by Pe\l czy\'{n}ski--Semadeni theorem, $\C(K)$ contains a positive isometric copy of $\C([0,1])$ and therefore, by Rosenthal \cite[Theorem 2.8]{RosenthalC(K)} (a result based on the proof of the Miljutin theorem), a positive isometric copy of $\C(L)$ for any metrizable compact space $L$. Consequently, if $K$ is nonscattered, then all items of Theorem \ref{main-isometric} hold for any compact metric space $L$. Thus, as we mentioned in the introduction, the essential weight of the proof and the novelty of the theorem lie in the case when $K$ is scattered, note however that our methods are general and do not put any assumptions on $K$.

    (2) Note that neither Proposition \ref{celularity} nor Corollary \ref{celularity_cor} put any assumptions on the space $L$, in particular, $L$ may be nonmetrizable. Implication (ii)$\Rightarrow$(iv) holds therefore for any compact space $L$. In fact, the only place in the proof of Theorem \ref{main-isometric} where the assumption of metrizability of $L$ is really used is implication (iv)$\Rightarrow$(v), as this implication may not hold if $L$ is nonmetrizable. Indeed, consider $L=[1,\omega_1]$ and $K=[0,1]$ and note that $L$ has uncountable cellularity whereas $K$ satisfies the countable chain condition, which implies that $\C(L)$ cannot be even isomorphically embedded into $\C(K)$, by \cite[Theorem 1.3]{rondos_cardinal_invariants}.

    (3) Due to Proposition \ref{celularity} and Corollary \ref{celularity_cor}, implication (ii)$\Rightarrow$(iv) can be divided into two parts (ii)$\Rightarrow$(ii') and (ii')$\Rightarrow$(iv), where item (ii') reads as follows:
    
    \medskip
    \textit{(ii') there exists a closed subset $F$ of $K$ such that $c(L^{(\alpha)},L)\le c(F^{(\alpha)},F)$ for each ordinal $\alpha$.}
\end{remark}

The second remark discusses possible extensions of Theorems \ref{main-isometric} and \ref{main-isometric-zero}.

\begin{remark}
(1) It is possible to get results analogous to Theorem \ref{main-isometric} for complex Banach spaces of complex-valued functions on zero-dimensional compact spaces. For example, one can use \cite[Theorem 1.1.(c)]{rondos-scattered-subspaces} to obtain a characterization of the presence of an embedding of $\C(L,\mathbb{C})$ into $\C(K,\mathbb{C})$ in the sense of $C^*$-algebras.

(2) Let us note that there are some other assertions which could be added to Theorem \ref{main-isometric} and which would be analogous to item (vi). For example, for certain Banach spaces $E$, there exists $\lambda>1$ such for each compact spaces $L$ and $K$ the existence of an isomorphic embedding of $\C(L)$ into $\C(K,E)$ with distortion $<\lambda$ implies that $L$ is a continuous image of a closed subset of $K$, see e.g. \cite[Theorem 3.1]{Galego-Villamizar_continuous_maps}. An analogous result for positive embeddings and certain Banach lattices is contained in \cite[Theorem 1.1]{Galego_Villamizar_positive_embeddings}. Also, it would be possible to add conditions concerning certain nonlinear embeddings, see e.g. \cite{GalegoSilvaNonlinear}. However, it seems that trying to add all those statements into Theorem \ref{main-isometric} would make the list a bit too long and therefore illegible.

(3) Let us also mention that \cite[Theorem 2]{Benyamini_IntoIsomorphisms} shows, among other things, that if $L$ is a compact metric space and $K$ is a compact space such that there exists an isomorphic embedding $T \colon \C(L) \rightarrow \C(K)$ with $\norm{T} \norm{T^{-1}} <2$, then $\C(L)$ is already isometrically embedded into $\C(K)$. Our Theorem \ref{main-isometric} shows that the value $2$ can actually be replaced by $3$. 
\end{remark}

Now, let us comment on the possibility of reversing implications considered in the proof of Theorem \ref{main-isometric} as presented in Figure \ref{diagram}, when the assumption of metrizability of $L$ is removed. We focus only on implications between those items which also occur in Theorem \ref{thm:mainA} in Introduction, that is, items (i)--(vi).

\begin{remark}\label{remark_implications}
    For implications other than (i)$\Rightarrow$(v) we have:

    \medskip
    
    \indent (ii)$\not\Rightarrow$(i) \& (iii)$\not\Rightarrow$(vi) The counterexamples were discussed in Introduction.\\[2pt]
    \indent (iv)$\not\Rightarrow$(ii) Set $K=[0,1]$ and $L=[1,\omega_1]$. Then, $L$ is scattered, $K^{(\infty)}=K$, and $\abs{L^{(ht(L)-1)}}=\abs{\{\omega_1\}}=1<\mathfrak{c}=\abs{K^{(\infty)}}=\abs{K^{(ht(L)-1)}}$, hence (iv) holds. However, no closed subset of $K$ maps onto $L$, as $L$ has uncountable cellularity, whereas each subset of $K$ satisfies the countable chain condition. Consequently, (ii) does not hold.\\[2pt]
    \indent (v)$\Rightarrow$(iv) Analyse Figure \ref{diagram}, having in mind that Corollary \ref{celularity_cor} does not require $L$ to be metrizable.\\[2pt]
    \indent (vi)$\not\Rightarrow$(i) For the counterexample, see \cite{Benyamini_IntoIsomorphisms}.\\[2pt]
    \indent (iv)$\not\Rightarrow$(iii) For the nonscattered case, consider $L=(\beta\en)\setminus\en$, i.e. the remainder of the \v{C}ech--Stone compactification of $\en$, and $K=[0,1]$. Then, $L^{(\infty)}=L$ and $K^{(\infty)}=K$, so (iv) holds, but, for any ordinal $\alpha$, we have $\abs{L^{(\alpha)}}=\abs{L}=2^\mathfrak{c}>\mathfrak{c}=\abs{K}=\abs{K^{(\alpha)}}$, so (ii) does not hold.

    For the scattered case, let $L=[1,2^{\mathfrak{c}}]$, where $2^{\mathfrak{c}}$ denotes the cardinal exponentiation, and $K=[0,1]$. Then, $K=K^{(\infty)}=K^{(ht(L)-1)}$ and $L^{(ht(L)-1)}=\{2^{\mathfrak{c}}\}$, so $\abs{L^{(ht(L)-1)}}=1<\mathfrak{c}=\abs{K}=\abs{K^{(ht(L)-1)}}$, hence (iv) holds. On the other hand, for $\alpha=0$ we have $\abs{L^{(\alpha)}}=\abs{L}=2^{\mathfrak{c}}>\mathfrak{c}=\abs{K}=\abs{K^{(\alpha)}}$, so (ii) is not satisfied.
\end{remark}


Concerning the validity of implication (i)$\Rightarrow$(v) for arbitrary spaces $L$, we pose the following question.

\begin{question}\label{question}
    Let $K$ and $L$ be compact spaces such that $\C(L)$ isometrically embeds into $\C(K)$. Suppose that $L$ is nonmetrizable. Does there exist a positive isometric embedding of $\C(L)$ into $\C(K)$?
\end{question}

The last remark concerns the necessity of the assumption of zero-dimensionality for the space $K$ in Theorem \ref{main-isometric-zero}.

\begin{remark}
Neither of items (ix) and (x) is equivalent with the assertions of Theorem \ref{main-isometric}, if the assumption of zero-dimensionality of $K$ is dropped. This is easy to see for the case of (ix)---consider simply the spaces $K=[0,1]$ and $L=\Delta$, and note that $\C(L)$ embeds positively isometrically into $\C(K)$ by Rosenthal \cite[Theorem 2.8]{RosenthalC(K)}, but $K$ cannot be continuously mapped onto $L$, since $L$ is disconnected. The case of (x) is more difficult, but \cite[Theorem C]{Aviles-Cervantes-RuedaZoca-Tradacete_linearvslatticeembeddings} shows that (i) and (x) are also not equivalent if $K$ is not zero-dimensional.
\end{remark}


We now move to our next main result, Theorem \ref{main-isomorphic}, which characterizes the presence of isomorphic copies of $\C(L)$ in $\C(K)$, where $L$ is metrizable. As mentioned in the introduction, here the main equivalent condition is the inequality of the Szlenk indices of $\C(L)$ and $\C(K)$, $Sz(\C(L)) \leq Sz(\C(K))$. While this inequality is, strictly speaking, a relation between the Banach spaces instead of the underlying compact spaces, it might be also understood as a topological relation between the compact spaces, having in mind that the Szlenk index of $\C(K)$ can be computed directly from the height of $K$. To explain how the latter is done, we need first to recall that a \emph{gamma number} is either an ordinal which is either $0$ or of the form $\omega^{\beta}$ for some ordinal $\beta$, or $\infty$. For an ordinal $\alpha$, let $\Gamma(\alpha)$ denote the minimal gamma number which is not less than $\alpha$. Set also $\Gamma(\infty)=\infty$. Then, the Szlenk index of $\C(K)$ is simply $\Gamma(ht(K))$, by Causey \cite{CAUSEY_C(K)_index}.

\begin{thm}
\label{main-isomorphic}
For a compact metric space $L$ and a compact space $K$, the following assertions are equivalent:
\begin{enumerate}[(i)]
    \item $Sz(\C(L)) \leq Sz(\C(K))$,
    \item $\C(K)$ contains an isomorphic copy of $\C(L)$,
    \item for each Banach space $E$, $\C(K, E)$ contains an isomorphic copy of $\C(L)$,
    \item there exists a Banach space $E$ not containing an isomorphic copy of $c_0$ and such that $\C(K, E)$ contains an isomorphic copy of $\C(L)$,
    \item there exists a Banach space $E$ with $Sz(E) \leq Sz(\C(K))$ and such that $\C(K, E)$ contains an isomorphic copy of $\C(L)$.
\end{enumerate}
\end{thm}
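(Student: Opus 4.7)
The plan is to argue the trivial implications (iii)$\Rightarrow$(ii), (iv), (v) by taking $E=\er$, the reverse implications (ii), (iv), (v)$\Rightarrow$(i) via monotonicity of the Szlenk index, and finally the substantive implication (i)$\Rightarrow$(iii).

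For (ii)$\Rightarrow$(i), monotonicity of $Sz$ under isomorphic embeddings suffices directly. For (iv)$\Rightarrow$(i) and (v)$\Rightarrow$(i), I would combine this monotonicity with known formulas, due to Causey, for the Szlenk index of vector-valued continuous function spaces: under either of the hypotheses on $E$ appearing in the theorem (no isomorphic copy of $c_0$, respectively $Sz(E)\le Sz(\C(K))$), one obtains $Sz(\C(K,E))=Sz(\C(K))$, so $Sz(\C(L))\le Sz(\C(K,E))=Sz(\C(K))$.

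For the main implication (i)$\Rightarrow$(iii), assume $Sz(\C(L))\le Sz(\C(K))$. If $L$ is nonscattered, then $Sz(\C(L))=\infty$; Causey's identity $Sz(\C(K))=\Gamma(ht(K))$ forces $K$ to be nonscattered, and Theorem \ref{main-isometric} (item (vii)) supplies an isometric copy of $\C(L)$ inside $\C(K,E)$ for every $E$. If $L$ is scattered, then $L$ is countable compact, so by the Mazurkiewicz--Sierpi\'nski theorem $L\cong[1,\omega^{\alpha}m]$ with $\alpha=ht(L)-1$ countable and $m=\abs{L^{(\alpha)}}$ finite. Assuming $\alpha\ge1$ (the case $\alpha=0$, where $L$ is finite, reduces to a trivial dimension count), the Bessaga--Pe\l czy\'nski classification gives $\C(L)\simeq C_{\beta}:=\C([1,\omega^{\omega^{\beta}}])$, where $\beta$ is the unique countable ordinal with $\omega^{\beta}\le\alpha<\omega^{\beta+1}$. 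By Causey's formula, $Sz(C_{\beta})=\Gamma(\omega^{\beta}+1)=\omega^{\beta+1}$, hence $\omega^{\beta+1}\le\Gamma(ht(K))$, which forces $ht(K)>\omega^{\beta}$ and in particular $K^{(\omega^{\beta})}\neq\emptyset$. Since $ht([1,\omega^{\omega^{\beta}}])=\omega^{\beta}+1$ and $\abs{[1,\omega^{\omega^{\beta}}]^{(\omega^{\beta})}}=1$, Theorem \ref{main-isometric} (implication (iv)$\Rightarrow$(vii)) produces an isometric copy of $C_{\beta}$ inside $\C(K,E)$; precomposing with the Bessaga--Pe\l czy\'nski isomorphism $\C(L)\to C_{\beta}$ then yields the required isomorphic copy of $\C(L)$ in $\C(K,E)$.

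The principal anticipated obstacle is pinning down the precise vector-valued Szlenk statements needed for (iv)$\Rightarrow$(i) and (v)$\Rightarrow$(i), together with smoothly handling the degenerate cases of the Bessaga--Pe\l czy\'nski parametrization (in particular $\alpha$ finite or $L$ finite-dimensional, where the Szlenk index comparison becomes uninformative). A secondary issue is metrizability of the replacement compactum $[1,\omega^{\omega^{\beta}}]$ used to invoke Theorem \ref{main-isometric}; this follows automatically from countability of $\beta$, itself ensured by $\alpha<\omega_1$, but should be noted explicitly.
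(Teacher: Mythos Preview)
Your overall cycle and the argument for (i)$\Rightarrow$(iii) are essentially the paper's approach (the paper proves (i)$\Rightarrow$(ii) via Bessaga--Pe\l czy\'nski and Proposition~\ref{Copies}, then gets (iii) trivially from (ii); you go straight to (iii) via Theorem~\ref{main-isometric}(vii), which is fine). Your treatment of (v)$\Rightarrow$(i) via Causey's formula $Sz(\C(K,E))=\max\{Sz(\C(K)),Sz(E)\}$ also matches the paper.

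There is, however, a genuine gap in your argument for (iv)$\Rightarrow$(i). You assert that when $E$ contains no isomorphic copy of $c_0$ one still has $Sz(\C(K,E))=Sz(\C(K))$, and you attribute this to Causey. That is false: Causey's identity reads $Sz(\C(K,E))=\max\{Sz(\C(K)),Sz(E)\}$, and the hypothesis ``$E$ does not contain $c_0$'' places no bound on $Sz(E)$. For a concrete failure, take $E=\ell_1$: it contains no copy of $c_0$, yet $\ell_1$ is not Asplund, so $Sz(\ell_1)=\infty$ and hence $Sz(\C(K,\ell_1))=\infty$ regardless of $K$. Thus monotonicity of the Szlenk index applied to the embedding $\C(L)\hookrightarrow\C(K,E)$ gives only $Sz(\C(L))\le\infty$, which is vacuous.

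The paper handles (iv)$\Rightarrow$(i) by invoking the main result of Rondo\v{s}--Somaglia \cite{rondos-somaglia}, which is a structural theorem about embeddings of $\C(L)$ into $\C(K,E)$ for $E$ not containing $c_0$; this bypasses the Szlenk index of $E$ entirely. You would need either to cite such a result or to supply an independent argument---Causey's formula alone is not enough here.
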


\begin{proof}
Implications (ii)$\Rightarrow$(iii)$\Rightarrow$(iv) and (iii)$\Rightarrow$(v) are trivial.

(i)$\Rightarrow$(ii) We first suppose that $L$ is countable. Then, by the classical Bessaga--Pe\l czy\'{n}ski classification \cite{BessagaPelcynski_classification}, there exists a countable ordinal $\alpha$ such that $\C(L)$ is isomorphic to $\C\big(\big[1, \omega^{\omega^{\alpha}}\big]\big)$. Thus, \[Sz(\C(L))=Sz(\C\big(\big[1, \omega^{\omega^{\alpha}}\big]\big))=\Gamma(ht\big(\big[1, \omega^{\omega^{\alpha}}\big]\big))=\Gamma(\omega^{\alpha}+1)=\omega^{\alpha+1}.\] 
Hence, if
\[\Gamma(ht(K))=Sz(\C(K)) \geq Sz(\C(L))=\omega^{\alpha+1},\] 
then $ht(K)>\omega^{\alpha}$, so $K^{(\omega^{\alpha})}\neq\emptyset$. This, by Proposition \ref{Copies} (for $U=K$), implies that $\C(K)$ contains a PNPP isometric copy of $\C\big(\big[1, \omega^{\omega^{\alpha}}\big]\big)$, and hence an isomorphic copy of $\C(L)$.

If, on the other hand, $L$ is uncountable, then $L$ is not scattered and by the classical Miljutin theorem (\cite[Theorem 2.1]{RosenthalC(K)}), $\C(L)$ is isomorphic to $\C([0, 1])$. Thus, $\infty=Sz(\C(L)) \leq Sz(\C(K))$, so $K$ is not scattered as well, which implies that $K$ maps continuously onto $[0,1]$, and hence that $\C(K)$ contains an isomorphic copy of $\C([0, 1]) \simeq \C(L)$.

(iv)$\Rightarrow$(i) follows from the main result of Rondo\v{s} and Somaglia \cite{rondos-somaglia}.

(v)$\Rightarrow$(i) It was proved by Causey \cite[Theorem 1.4]{Causey_szlenk_hulls} that for each Banach space $E$ it holds $Sz(\C(K, E))=\max \{Sz(\C(K)), Sz(E)\}$. Thus, we have
\[Sz(\C(L)) \leq Sz(\C(K, E))=\max \{Sz(\C(K)), Sz(E)\}=Sz(\C(K)).\]
\end{proof}

\begin{remark}
    By the discussion preceding Theorem \ref{main-isomorphic} we get that, for a compact metric space $L$ and a compact space $K$, $\C(L)$ isomorphically embeds into $\C(K)$ if and only if $\Gamma(ht(L))\le\Gamma(ht(K))$.
\end{remark}

The proof of Theorem \ref{main-isomorphic} yields the following corollary. 

\begin{cor}
\label{equvialence for some ordinals}
For a countable ordinal $\alpha$ and a compact space $K$ the following assertions are equivalent:
\begin{enumerate}[(i)]
    \item $\C(K)$ contains an isomorphic copy of $\C\big(\big[1, \omega^{\omega^\alpha}\big]\big)$, 
    \item $\C(K)$ contains an isometric copy of $\C\big(\big[1, \omega^{\omega^\alpha}\big]\big)$,
    \item $\C(K)$ contains a positive isometric copy of $\C\big(\big[1, \omega^{\omega^\alpha}\big]\big)$.
\end{enumerate}
\end{cor}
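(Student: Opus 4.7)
The plan is to observe that the implications (iii)$\Rightarrow$(ii)$\Rightarrow$(i) are immediate from the relevant definitions, so the only nontrivial content of the corollary is (i)$\Rightarrow$(iii). Moreover, this implication is essentially already contained in the proof of implication (i)$\Rightarrow$(ii) of Theorem \ref{main-isomorphic}, once one notices that the embedding constructed there is in fact PNPP (and hence positive), rather than merely isomorphic.

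More precisely, I would proceed as follows. Set $L=[1,\omega^{\omega^\alpha}]$. Assuming (i), the Szlenk index is monotone and invariant under isomorphic embeddings, so combining this with Causey's formula $Sz(\C(K))=\Gamma(ht(K))$ yields
\[\Gamma(ht(K))=Sz(\C(K))\ge Sz(\C(L))=\Gamma\bigl(ht(L)\bigr)=\Gamma(\omega^\alpha+1)=\omega^{\alpha+1},\]
where we used that $ht(L)=\omega^\alpha+1$, as recorded in Section \ref{sec:prelim}. It follows that $ht(K)>\omega^\alpha$, which is precisely the statement $K^{(\omega^\alpha)}\neq\emptyset$.

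Now I would apply Proposition \ref{Copies} with $U=K$, with $m=1$, and with the countable ordinal $\omega^\alpha$ in place of $\alpha$. This produces a PNPP isometric embedding of $\C([1,\omega^{\omega^\alpha}])=\C(L)$ into $\C(K)$. As observed in the paragraph following the definition of PNPP, every PNPP isometric embedding is in particular a positive operator, so this gives (iii) and closes the circle.

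No step should present a genuine obstacle, since all the heavy lifting is already done in Proposition \ref{Copies}; the corollary is really a clean packaging of the fact that for the specific intervals $[1,\omega^{\omega^\alpha}]$ the Szlenk index inequality forces nonemptiness of the single Cantor--Bendixson derivative $K^{(\omega^\alpha)}$ that governs the construction. The only point requiring minor care is the arithmetic $\Gamma(\omega^\alpha+1)=\omega^{\alpha+1}$, which is immediate from the definition of $\Gamma$ since $\omega^{\alpha+1}=\omega^\alpha\cdot\omega$ is the smallest gamma number strictly above $\omega^\alpha$.
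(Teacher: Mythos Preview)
Your proposal is correct and follows essentially the same argument as the paper: the trivial implications (iii)$\Rightarrow$(ii)$\Rightarrow$(i), and for (i)$\Rightarrow$(iii) one uses the Szlenk index inequality together with Causey's formula to deduce $K^{(\omega^\alpha)}\neq\emptyset$, and then invokes Proposition \ref{Copies} to produce a PNPP (hence positive) isometric embedding. The paper's proof simply points back to the argument of (i)$\Rightarrow$(ii) in Theorem \ref{main-isomorphic}, which is exactly what you have unpacked.
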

\begin{proof}
Trivially, (iii)$\Rightarrow$(ii)$\Rightarrow$(i). On the other hand, if $\C(K)$ contains an isomorphic copy of $\C\big(\big[1, \omega^{\omega^\alpha}\big]\big)$, then as in the proof of the implication (i)$\Rightarrow$(ii) of Theorem \ref{main-isomorphic} we deduce that $\C(K)$ contains a positive isometric copy of $\C\big(\big[1, \omega^{\omega^{\alpha}}\big]\big)$.
\end{proof}

\begin{remark}
Equivalence (i)$\Leftrightarrow$(ii) in Corollary \ref{equvialence for some ordinals} also follows from a more general result of Alspach \cite[Theorem 4]{Bourgain1979}.
\end{remark}

\section{Cellularity of derived sets of $K$ in terms of $\C(K)$\label{sec:cell}}

In this section we present results describing the relative cellularity of derived sets of a compact space $K$ in terms of the structure of the space $\C(K)$.

\begin{thm}
\label{Cellularity of derived sets}
For a compact space $K$ and a countable ordinal $\alpha$, the following three cardinal numbers coincide:
\begin{enumerate}
    \item $c(K^{(\alpha)}, K)$,
    \item $\sup  \big\{ \abs{\Gamma} \colon \C(K_{\Gamma, [1, \omega^{\alpha}]}) \text{ embeds positively isometrically into } \C(K)\big\}$,
    \item $\sup  \big\{ \abs{\Gamma}\colon  \C(K_{\Gamma, [1, \omega^{\alpha}]}) \text{ embeds isometrically into } \C(K)\big\}$.
\end{enumerate}
If, moreover, $K$ is zero-dimensional, then the above three cardinal numbers coincide also with the following two:
\begin{enumerate}\setcounter{enumi}{3}
    \item $\sup  \big\{ \abs{\Gamma} \colon  K \text{ maps continuously onto } K_{\Gamma, [1, \omega^{\alpha}]}\big\}$,
    \item $\sup  \big\{ \abs{\Gamma} \colon \C(K_{\Gamma, [1, \omega^{\alpha}]}) \text{ embeds lattice and algebraically isometrically into } \C(K)\big\}$.
    \end{enumerate}
\end{thm}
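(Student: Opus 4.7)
Write $(i)$ for the cardinal in item $(i)$. The plan is to run the cycle of inequalities $c(K^{(\alpha)},K) \leq \sup(2) \leq \sup(3) \leq c(K^{(\alpha)},K)$ in the general case, and then add $c(K^{(\alpha)},K) \leq \sup(4) \leq \sup(5) \leq \sup(2)$ in the zero-dimensional case. Several links are trivial: $\sup(2) \leq \sup(3)$ because every positive isometric embedding is isometric; $\sup(5) \leq \sup(2)$ because every lattice-and-algebra isometric embedding is in particular positive; and $\sup(4) \leq \sup(5)$ because a continuous surjection $\rho:K\twoheadrightarrow K_{\Gamma,[1,\omega^\alpha]}$ induces the composition embedding $f \mapsto f\circ\rho$, which is an isometric embedding preserving both the lattice and the algebra operations.

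For $\sup(3) \leq c(K^{(\alpha)},K)$, fix $\Gamma$ admitting an isometric embedding of $\C(K_{\Gamma,[1,\omega^\alpha]})$ into $\C(K)$. Proposition \ref{isometry_and_cellularity} gives
\[c\bigl(K_{\Gamma,[1,\omega^\alpha]}^{(\alpha)},\, K_{\Gamma,[1,\omega^\alpha]}\bigr) \leq c(K^{(\alpha)}, K).\]
The summands $L_\gamma \cong [1,\omega^\alpha]$ form a pairwise disjoint family of open (in fact clopen when $\Gamma$ is finite, and open otherwise) subsets of $K_{\Gamma,[1,\omega^\alpha]}$, and each $L_\gamma$ contains its own copy of $\omega^\alpha$. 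Since $L_\gamma$ is open and $\omega^\alpha \in L_\gamma^{(\alpha)}$, a straightforward induction shows $\omega^\alpha \in K_{\Gamma,[1,\omega^\alpha]}^{(\alpha)}$, so the family $\{L_\gamma\}_{\gamma\in\Gamma}$ witnesses that the left-hand side is at least $|\Gamma|$. Hence $|\Gamma| \leq c(K^{(\alpha)},K)$.

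For $c(K^{(\alpha)},K) \leq \sup(2)$, pick any cellular family $\{U_\gamma\}_{\gamma\in\Gamma}$ in $K$ with $U_\gamma \cap K^{(\alpha)} \neq \emptyset$. The inclusion $U_\gamma \cap K^{(\alpha)} \subseteq U_\gamma^{(\alpha)}$ (noted in Section \ref{sec:prelim}) gives $U_\gamma^{(\alpha)} \neq \emptyset$, so Proposition \ref{Copies} (with $m=1$) produces a PNPP isometric embedding $T_\gamma : \C([1,\omega^\alpha]) \to \C(K)$ supported by $U_\gamma$. If $\Gamma$ is infinite, apply Proposition \ref{copies of C(K_Gamma_L)} with $U=K$ and $h=\mathbf{1}_K$ (the constant-$1$ function, which has norm $1$, is supported by $K$, and satisfies $\{h=1\}=K$), giving a PNPP isometric embedding of $\C(K_{\Gamma,[1,\omega^\alpha]})$ into $\C(K)$. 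If $\Gamma$ is finite, $K_{\Gamma,[1,\omega^\alpha]}$ is the disjoint union of $|\Gamma|$ copies of $[1,\omega^\alpha]$, and the map $f \mapsto \sum_\gamma T_\gamma(f|_{L_\gamma})$ is directly seen to be a PNPP isometric embedding. In either case $|\Gamma|\leq\sup(2)$, and taking the supremum over cellular families yields $c(K^{(\alpha)},K)\leq\sup(2)$.

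Finally, for the zero-dimensional case, to prove $c(K^{(\alpha)},K) \leq \sup(4)$ we repeat the construction but, using zero-dimensionality of $K$, refine each $U_\gamma$ to a clopen neighborhood $V_\gamma$ of some point $x_\gamma \in U_\gamma \cap K^{(\alpha)}$. Proposition \ref{ContinuousMaps} provides continuous surjections $V_\gamma \twoheadrightarrow [1,\omega^\alpha]$. If $\Gamma$ is infinite, Lemma \ref{Mapping onto K_Gamma_L} directly delivers a continuous surjection $K \twoheadrightarrow K_{\Gamma,[1,\omega^\alpha]}$; if $\Gamma$ is finite, $K_{\Gamma,[1,\omega^\alpha]}$ is a disjoint union and the desired surjection is built by combining the maps $V_\gamma \twoheadrightarrow L_\gamma$ with the constant map from the clopen remainder $K\setminus\bigcup_\gamma V_\gamma$ to a point in $L_1$. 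Either way $|\Gamma|\leq\sup(4)$, closing the second cycle. The principal obstacle is this coordination between the finite and infinite cases of $\Gamma$, since both Proposition \ref{copies of C(K_Gamma_L)} and Lemma \ref{Mapping onto K_Gamma_L} are stated only for infinite $\Gamma$; but in the finite regime the relevant one-point compactification collapses to an honest disjoint union and the constructions simplify accordingly.
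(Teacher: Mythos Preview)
Your proposal is correct and follows essentially the same approach as the paper: the same cycle of inequalities $(1)\leq(2)\leq(3)\leq(1)$ and $(1)\leq(4)\leq(5)\leq(2)$, invoking Proposition~\ref{Copies} and Proposition~\ref{copies of C(K_Gamma_L)} (with $h=\mathbf{1}_K$) for $(1)\leq(2)$, Proposition~\ref{isometry_and_cellularity} for $(3)\leq(1)$, and Proposition~\ref{ContinuousMaps} with Lemma~\ref{Mapping onto K_Gamma_L} for $(1)\leq(4)$. Your treatment is in fact slightly more explicit than the paper's in two places: you spell out the refinement to clopen $V_\gamma$'s in the zero-dimensional step (the paper simply starts with clopen $U_\gamma$'s), and you give the finite-$\Gamma$ constructions in detail where the paper just writes ``trivially''. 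One harmless slip: the summands $L_\gamma$ are clopen in $K_{\Gamma,[1,\omega^\alpha]}$ even when $\Gamma$ is infinite, not merely open.
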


\begin{proof}
For the proof of inequality $(1) \leq (2)$, let $\{U_{\gamma}\colon \gamma \in \Gamma \}$ be a nonempty cellular family in $K$ such that $U_{\gamma}\cap K^{(\alpha)}\neq\emptyset$ for each $\gamma \in \Gamma$. By Proposition \ref{Copies}, for each $\gamma \in \Gamma$ there exists a PNPP isometric embedding $T_{\gamma}\colon\C([1, \omega^{\alpha}])\to\C(K)$ which is supported by the set $U_{\gamma}$. Hence, $\C(K_{\Gamma, [1, \omega^{\alpha}]})$ embeds positively isometrically into $\C(K)$---trivially if $\Gamma$ is finite, and by Proposition \ref{copies of C(K_Gamma_L)} (with $h$ being the constant-$1$ function $\mathbf{1}$ on $K$) if $\Gamma$ is infinite.

Inequality $(2) \leq (3)$ is immediate. 

Inequality $(3) \leq (1)$ follows from Proposition \ref{isometry_and_cellularity}, as it is easy to check that for each countable ordinal $\alpha$ we have 
\[c\Big(\big(K_{\Gamma, [1, \omega^{\alpha}]}\big)^{(\alpha)},\ K_{\Gamma, [1, \omega^{\alpha}]}\Big)=\abs{\Gamma},\]
(recall here that $([1, \omega^{\alpha}])^{(\alpha)}=\{\omega^{\alpha}\}$).

\medskip

For the rest of the proof we assume that $K$ is zero-dimensional. 

Inequalities $(4) \leq (5)$ and $(5) \leq (2)$ are trivial. 

For the proof of inequality $(1) \leq (4)$, let $\{U_{\gamma} \colon \gamma \in \Gamma \}$ be a nonempty family of pairwise disjoint clopen subsets of $K$ such that $U_{\gamma}\cap K^{(\alpha)}\neq\emptyset$ for each $\gamma \in \Gamma$. 
By Proposition \ref{ContinuousMaps}, for each $\gamma\in\Gamma$, there exists a continuous surjection $\rho_{\gamma} \colon U_{\gamma} \rightarrow [1, \omega^{\alpha}]$. Then, $K$ maps continuously onto $K_{\Gamma, [1, \omega^{\alpha}]}$---again, trivially if $\Gamma$ is finite, and by Lemma \ref{Mapping onto K_Gamma_L} if $\Gamma$ is infinite.
\end{proof}

For our last main result, describing the relative cellularity of the perfect kernel of a compact space $K$, we will need the following lemma, which follows easily from standard facts.

\begin{lemma}
\label{Tietze}
Let $K$ be a compact space and $U$ be an open subset of $K$ such that $U \cap K^{(\infty)} \neq \emptyset$. Then:
\begin{enumerate}[(i)]
    \item there is a PNPP isometric embedding $T \colon \C([0, 1]) \rightarrow \C(K)$ which is supported by $U$,
    \item if $K$ is zero-dimensional and $U$ is clopen, then $U$ maps continuously onto the Cantor space $\Delta$. 
\end{enumerate}
\end{lemma}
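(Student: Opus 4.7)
For part (ii), the argument is immediate from the zero-dimensional case of the Pe\l czy\'nski--Semadeni theorem: since $U$ is clopen in $K$, one checks directly that for every ordinal $\alpha$ the derivative of $U$ taken in its subspace topology agrees with $U \cap K^{(\alpha)}$, so $U^{(\infty)} = U \cap K^{(\infty)} \neq \emptyset$; hence $U$ is a compact zero-dimensional nonscattered space, and Pe\l czy\'nski--Semadeni yields a continuous surjection $U \to \Delta$.

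For part (i), I would first fix a point $x \in U \cap K^{(\infty)}$ and an open neighborhood $V$ of $x$ with $\overline V \subseteq U$, and show that $\overline V$ is nonscattered. The set $V \cap K^{(\infty)}$ is nonempty, and since $K^{(\infty)}$ is perfect in $K$ and $V$ is open in $K$, every point of $V \cap K^{(\infty)}$ is a limit point of $V \cap K^{(\infty)}$. Thus $\overline{V \cap K^{(\infty)}} \subseteq \overline V$ is a nonempty perfect compact set, so $(\overline V)^{(\infty)} \neq \emptyset$ and the Pe\l czy\'nski--Semadeni theorem produces a continuous surjection $\rho \colon \overline V \to [0,1]$.

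Next, I would extend $\rho$ to a continuous map $\tilde\rho \colon K \to [0,1]$ via Tietze's theorem (composing with the retraction $\er \to [0,1]$ if necessary), and by Urysohn's lemma pick a continuous $h \colon K \to [0,1]$ with $h \equiv 1$ on $\overline V$ and $h \equiv 0$ on $K \setminus U$. Define
\[
T \colon \C([0,1]) \to \C(K), \qquad T(f) = h \cdot (f \circ \tilde\rho).
\]
Then $T$ is clearly linear, and $\spt(T(f)) \subseteq \spt(h) \subseteq U$, so $T$ is supported by $U$.

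Finally, I would verify that $T$ is a PNPP isometric embedding. Since $h \ge 0$, pointwise we have $T(f)^+ = h \cdot (f \circ \tilde\rho)^+$, and therefore $\|T(f)^+\| \le \|(f \circ \tilde\rho)^+\| \le \|f^+\|$, and analogously for $T(f)^-$ and $T(f)$. On the other hand, on $\overline V$ we have $h \equiv 1$ and $\tilde\rho = \rho$ is surjective onto $[0,1]$, so the restriction $T(f)|_{\overline V} = f \circ \rho$ already attains each of $\|f\|$, $\|f^+\|$, and $\|f^-\|$, yielding the reverse inequalities. The main subtlety lies precisely in this last step: the naive candidate $f \circ \tilde\rho$ does not vanish outside $U$, so it must be dampened by $h$, and one has to verify that this multiplication does not spoil the norms of the positive and negative parts; this works because $h$ is identically $1$ on a set where $\rho$ is already surjective onto $[0,1]$.
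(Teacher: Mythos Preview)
Your argument follows the same route as the paper's, and the PNPP verification is in fact cleaner than the paper's (your observation that $T(f)^+ = h\cdot (f\circ\tilde\rho)^+$ when $h\ge 0$ makes the estimates immediate). There is, however, one small gap. From the Urysohn function you chose, with $h\equiv 0$ on $K\setminus U$, you only get $\{x:h(x)\neq 0\}\subseteq U$ and hence $\spt(h)\subseteq\overline U$; the inclusion $\spt(h)\subseteq U$ that you use need not hold, since boundary points of $U$ can lie in the closure of $\{h\neq 0\}$. Thus the conclusion that $T$ is supported by $U$ is not yet justified.

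The fix is exactly what the paper does: insert an intermediate open set. Choose open $W$ with $\overline V\subseteq W\subseteq\overline W\subseteq U$ (normality of $K$) and take $h$ with $h\equiv 1$ on $\overline V$ and $h\equiv 0$ on $K\setminus W$; then $\spt(T(f))\subseteq\spt(h)\subseteq\overline W\subseteq U$, and the rest of your argument goes through unchanged. Apart from this, your proof of~(i) matches the paper's (your argument that $\overline V$ is nonscattered, via the perfect set $\overline{V\cap K^{(\infty)}}$, is a harmless variant of the paper's derivative computation), and your proof of~(ii) is identical to the paper's.
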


\begin{proof}
 We find open sets $V_1, V_2$ of $K$ such that $\overline{V_1} \subseteq V_2 \subseteq \overline{V_2} \subseteq U$ and $V_1 \cap K^{(\infty)} \neq \emptyset$. Then, $(\overline{V_1})^{(\infty)}\neq\emptyset$, since for each ordinal $\alpha$ we have $V_1 \cap K^{(\alpha)} \subseteq V_1^{(\alpha)} \subseteq \overline{V_1}^{(\alpha)}$ and hence 
$V_1 \cap K^{(\infty)} \subseteq \overline{V_1}^{(\infty)}$. Consequently, $\overline{V_1}$ is a nonscattered compact space and therefore there exists a continuous surjection $\Tilde{\rho} \colon \overline{V_1} \rightarrow [0, 1]$. We extend $\Tilde{\rho}$ to a continuous surjection $\rho:K \rightarrow [0, 1]$ using the Tietze extension theorem. We further find a function $g \in \C(K, [0, 1])$ which is equal to constant $1$ on the set $\overline{V_1}$, and which vanishes outside of the set $V_2$. Then, we define the mapping $T \colon \C([0, 1]) \rightarrow \C(K)$, for every $f \in \C([0, 1])$ and $x\in K$, by
\[T(f)(x)=f(\rho(x))\cdot g(x).\]
Clearly, $T$ is linear and $\spt T(f) \subseteq \overline{V_2} \subseteq U$ for each $f \in \C([0, 1])$, so $T$ is supported by $U$. 

Fix $f\in\C([0,1])$. Since $0 \leq g \leq 1$, it is clear that $\norm{T(f)} \leq \norm{f}$. On the other hand, 
\[\norm{T(f)} \geq \sup_{x \in \overline{V_1}} \abs{f(\rho(x))\cdot g(x)}=\sup_{x \in \overline{V_1}} \abs{f(\Tilde{\rho}(x))}=\norm{f}.\]
Thus, $T$ is an isometric embedding. 
We also have
\[\norm{T(f)^+}=\sup_{x\in K}\max\big\{f(\rho(x))\cdot g(x), 0\big\}=\sup_{x\in K}\max\big\{f(\rho(x)), 0\big\}\cdot g(x)=\]
\[=\sup_{x\in V_2}\max\big\{f(\rho(x)), 0\big\}\cdot g(x).\]
Hence, 
\[\norm{T(f)^+}\ge\sup_{x\in\overline{V_1}}\max\big\{f(\rho(x)), 0\big\}\cdot g(x)=\sup_{x\in\overline{V_1}}\max\big\{f(\rho(x)), 0\big\}=\]
\[=\sup_{t\in[0,1]}\max\{f(t), 0\}=\norm{f^+},\]
and
\[\norm{T(f)^+}\le\sup_{x\in V_2}\max\big\{f(\rho(x)), 0\big\}=\sup_{t\in[0,1]}\max\{f(t), 0\}=\norm{f^+},\]
so $\norm{T(f)^+}=\norm{f^+}$, which shows that $T$ is PNPP, and thus the proof of (i) is finished.

\medskip

If $K$ is zero-dimensional and $U$ is clopen, then $U$ is also a zero-dimensional nonscattered compact space, and hence $U$ maps continuously onto the Cantor space $\Delta$, so (ii) holds, too.
\end{proof}

\begin{thm}
\label{Cellularity of perfect kernel}
For a compact space $K$, the following three cardinal numbers coincide:
\begin{enumerate}[(1)]
    \item $c(K^{(\infty)}, K)$,
    \item $\sup  \big\{ \abs{\Gamma} \colon \C(K_{\Gamma, [0, 1]}) \text{ embeds positively isometrically into } \C(K)\big\}$,
    \item $\sup  \big\{ \abs{\Gamma} \colon \C(K_{\Gamma, [0, 1]}) \text{ embeds isometrically into } \C(K)\big\}$.
\end{enumerate}
If, moreover, $K$ is zero-dimensional, then the above three cardinal numbers coincide also with the following two:
\begin{enumerate}\setcounter{enumi}{3}
    \item $\sup  \big\{ \abs{\Gamma} \colon  K \text{ maps continuously onto } K_{\Gamma, \Delta}\big\}$,
    \item $\sup  \big\{ \abs{\Gamma} \colon \C(K_{\Gamma, \Delta}) \text{ embeds lattice and algebraically-isometrically into } \C(K)\big\}$.
\end{enumerate}
\end{thm}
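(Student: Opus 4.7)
The plan is to mirror the proof of Theorem \ref{Cellularity of derived sets} almost verbatim, with Lemma \ref{Tietze} playing the role that Propositions \ref{Copies} and \ref{ContinuousMaps} played in the scattered, countable-height case. All of the surrounding infrastructure --- Proposition \ref{copies of C(K_Gamma_L)} for pasting together local isometries into one supported on a union, Proposition \ref{isometry_and_cellularity} for transferring cellularity upwards through isometric embeddings, and Lemma \ref{Mapping onto K_Gamma_L} for pasting local surjections into a surjection onto $K_{\{L_\gamma\}_{\gamma\in\Gamma}}$ --- is already in place. So the work is organizational rather than technical; the main (very mild) obstacle is that Proposition \ref{isometry_and_cellularity} is formulated for ordinal derivatives, and one has to pass to the perfect kernel by stabilization.

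For $(1)\le (2)$, I would pick a nonempty cellular family $\{U_\gamma\colon \gamma\in\Gamma\}$ in $K$ with $U_\gamma\cap K^{(\infty)}\neq\emptyset$ for every $\gamma$. By Lemma \ref{Tietze}(i), for each $\gamma$ there is a PNPP isometric embedding $T_\gamma\colon \C([0,1])\to\C(K)$ supported by $U_\gamma$. If $\Gamma$ is finite the desired embedding of $\C(K_{\Gamma,[0,1]})$ into $\C(K)$ is obtained trivially by adding the $T_\gamma$ to a suitable constant-part term; if $\Gamma$ is infinite I apply Proposition \ref{copies of C(K_Gamma_L)} with $h=\mathbf{1}$ to produce a single PNPP isometric embedding of $\C(K_{\Gamma,[0,1]})$ into $\C(K)$. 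The implication $(2)\le(3)$ is immediate.

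For $(3)\le(1)$, fix $\Gamma$ such that $\C(K_{\Gamma,[0,1]})$ embeds isometrically into $\C(K)$, and denote $L=K_{\Gamma,[0,1]}$. Since every copy of $[0,1]$ inside $L$ is clopen and satisfies $([0,1])^{(\infty)}=[0,1]$, the collection of these copies is a cellular family in $L$ each of whose members meets $L^{(\infty)}$; hence $|\Gamma|\le c(L^{(\infty)},L)$. Choose an ordinal $\alpha$ so large that $K^{(\alpha)}=K^{(\infty)}$ and $L^{(\alpha)}=L^{(\infty)}$. Proposition \ref{isometry_and_cellularity} then yields
\[|\Gamma|\le c(L^{(\infty)},L)=c(L^{(\alpha)},L)\le c(K^{(\alpha)},K)=c(K^{(\infty)},K),\]
which is the desired inequality.

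For the zero-dimensional case, $(4)\le(5)$ follows because a continuous surjection $K\to K_{\Gamma,\Delta}$ induces a lattice and algebra isometric embedding via composition, and $(5)\le(2)$ is trivial since such an embedding is in particular a positive isometry. To close the loop with $(1)\le(4)$, I pick a cellular family $\{U_\gamma\colon \gamma\in\Gamma\}$ of clopen sets with $U_\gamma\cap K^{(\infty)}\neq\emptyset$; by Lemma \ref{Tietze}(ii) each $U_\gamma$ maps continuously onto $\Delta$, and then Lemma \ref{Mapping onto K_Gamma_L} (together with the trivial finite case) gives a continuous surjection of $K$ onto $K_{\Gamma,\Delta}$, completing the argument.
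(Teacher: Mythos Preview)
Your proof follows the paper's strategy and is essentially correct, with one slip in the zero-dimensional part. You write that ``$(5)\le(2)$ is trivial since such an embedding is in particular a positive isometry,'' but items $(2)$ and $(5)$ concern different spaces: $(2)$ is about $\C(K_{\Gamma,[0,1]})$ while $(5)$ is about $\C(K_{\Gamma,\Delta})$. Knowing that $\C(K_{\Gamma,\Delta})$ embeds positively isometrically into $\C(K)$ does not, by the reason you give, yield a positive isometric embedding of $\C(K_{\Gamma,[0,1]})$.

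There are two easy repairs. The paper's route is to prove $(5)\le(1)$ directly by rerunning your $(3)\le(1)$ argument with $K_{\Gamma,\Delta}$ in place of $K_{\Gamma,[0,1]}$; this works verbatim since $K_{\Gamma,\Delta}$ is also perfect. Alternatively, you can rescue $(5)\le(2)$ by noting that $\Delta$ surjects continuously onto $[0,1]$, hence (applying this coordinatewise) $K_{\Gamma,\Delta}$ surjects onto $K_{\Gamma,[0,1]}$, so $\C(K_{\Gamma,[0,1]})$ embeds via composition as a sublattice and subalgebra of $\C(K_{\Gamma,\Delta})$; composing with the embedding from $(5)$ then gives what $(2)$ asks for. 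Apart from this, your argument---including the stabilization trick for $(3)\le(1)$---is the same as the paper's.
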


\begin{proof}
The proof is naturally similar to the one of Theorem \ref{Cellularity of derived sets}. We may assume that $K^{(\infty)}\neq\emptyset$, otherwise the result holds by the Pe\l czy\'{n}ski--Semadeni theorem.

For the proof of inequality $(1) \leq (2)$, let $\{U_{\gamma}\colon \gamma \in \Gamma \}$ be an infinite cellular family in $K$ such that $U_{\gamma}\cap K^{(\infty)}\neq\emptyset$ for each $\gamma \in \Gamma$. By Lemma \ref{Tietze}.(i), for each $\gamma \in \Gamma$ there exists a PNPP isometric embedding $T_{\gamma}\colon\C([0, 1])\to\C(K)$ which is supported by the set $U_{\gamma}$. Thus, by Proposition \ref{copies of C(K_Gamma_L)}, $\C(K_{\Gamma, [0, 1]]})$ embeds positively isometrically into $\C(K)$.

Inequality $(2) \leq (3)$ is trivial. 

Inequality $(3) \leq (1)$ holds due to Proposition \ref{isometry_and_cellularity}, since it is clear that the space $K_{\Gamma, [0, 1]}$ is perfect and it is easily seen that its cellularity is equal to $\Gamma$. 

\medskip

For the rest of the proof assume that $K$ is zero-dimensional. 

Inequality $(4) \leq (5)$ is immediate. 

For the proof of inequality $(5) \leq (1)$, follow the argument for $(3) \leq (1)$ with $K_{\Gamma, \Delta}$ in place of $K_{\Gamma, [0, 1]}$.

To see that $(1) \leq (4)$ holds, let $\{U_{\gamma}\colon \gamma \in \Gamma \}$ be an infinite family of pairwise disjoint clopen subsets of $K$ such that $U_{\gamma}\cap K^{(\infty)}\neq\emptyset$ for each $\gamma \in \Gamma$. 
By Lemma \ref{Tietze}.(ii), for each $\gamma\in\Gamma$, there exists a continuous surjection $\rho_{\gamma} \colon U_{\gamma} \rightarrow \Delta$. Thus, $K$ maps continuously onto $K_{\Gamma, \Delta}$ by Lemma \ref{Mapping onto K_Gamma_L}.
\end{proof}

Let us finish the paper with the following remark concerning zero-dimensional compact spaces and their Boolean algebras of clopen sets. 

\begin{remark}
If $K$ and $L$ are zero-dimensional compact spaces, then, by virtue of the so-called Stone duality (see \cite[Chapter 3]{Koppelberg_v1}), $K$ continuously maps onto $L$ if and only if the Boolean algebra $Clopen(K)$ of all clopen subsets of $K$ contains a subalgebra isomorphic to the corresponding Boolean algebra $Clopen(L)$ of $L$. Therefore, if $L$ is also zero-dimensional, statement (ix) of Theorem \ref{main-isometric-zero} can be expressed in the following equivalent way:
    
\medskip
\textit{(ix') $Clopen(L)$ embeds homomorphically into $Clopen(K)$.}\\

For an infinite collection $\{L_\gamma\}_{\gamma\in\Gamma}$ of zero-dimensional compact spaces, the algebra $Clop(K_{\{L_\gamma\}_{\gamma\in\Gamma}})$ is isomorphic to the so-called weak product $\prod_{\gamma\in\Gamma}^w Clop(L_\gamma)$ of the collection $\{Clop(L_\gamma)\}_{\gamma\in\Gamma}$ of the Boolean algebras, see \cite[Proposition 8.10]{Koppelberg_v1}. 
Also, for the Cantor space $\Delta$, the algebra $Clop(\Delta)$ is isomorphic to the countable free Boolean algebra $Fr(\omega)$, see \cite[Corollary 9.7.(a)]{Koppelberg_v1}. 

Consequently, for a zero-dimensional compact space $K$, items (4) of both Theorems \ref{Cellularity of derived sets} and \ref{Cellularity of perfect kernel} can be expressed in the equivalent Boolean-theoretic way, yielding the following additional equalities (4'):
\textit{\[c(K^{(\alpha)}, K)=\sup  \Big\{ \abs{\Gamma} \colon \prod_{\gamma\in\Gamma}^w Clop([1, \omega^{\alpha}]) \text{ embeds homomorphically into }Clop(K)\Big\}\]}
and
\textit{\[c(K^{(\infty)}, K)=\sup  \Big\{ \abs{\Gamma} \colon \prod_{\gamma\in\Gamma}^w Fr(\omega)  \text{ embeds homomorphically into }Clop(K)\Big\}.\]}
\end{remark}

\bibliography{bibliography}\bibliographystyle{siam}
\end{document}